\begin{document}

\baselineskip 7mm

\newtheorem{lem}{Lemma}[section]
\newtheorem{thm}[lem]{Theorem}
\newtheorem{cor}[lem]{Corollary}
\newtheorem{exa}[lem]{Example}
\newtheorem{con}[lem]{Conjecture}
\newtheorem{rem}[lem]{Remark}
\newtheorem{obs}[lem]{Observation}
\newtheorem{definition}[lem]{Definition}
\newtheorem{prop}[lem]{Proposition}
\theoremstyle{plain}
\newcommand{\D}{\displaystyle}
\newcommand{\DF}[2]{\D\frac{#1}{#2}}

\renewcommand{\figurename}{Fig.}
\captionsetup{labelfont=bf}

\title{ The (vertex-)monochromatic index of a graph\footnote{Supported by NSFC No.11371205
and 11531011, ``973" program No.2013CB834204, and PCSIRT.}
}

\author{\small Xueliang~Li, Di~Wu\\
\small Center for Combinatorics and LPMC \\
\small Nankai University, Tianjin 300071, China\\
\small lxl@nankai.edu.cn; wudiol@mail.nankai.edu.cn}
\date{}
\maketitle

\begin{abstract}
A tree $T$ in an edge-colored graph $H$ is called a \emph{monochromatic tree}
if all the edges of $T$ have the same color.
For $S\subseteq V(H)$, a \emph{monochromatic $S$-tree} in $H$ is a monochromatic tree
of $H$ containing the vertices of $S$.
For a connected graph $G$ and a given integer $k$ with $2\leq k\leq |V(G)|$,
the \emph{$k$-monochromatic index $mx_k(G)$} of $G$ is the maximum number of
colors needed
such that for each subset $S\subseteq V(G)$ of $k$ vertices,
there exists a monochromatic $S$-tree.
In this paper, we prove that for any connected graph $G$, $mx_k(G)=|E(G)|-|V(G)|+2$
for each $k$ such that $3\leq k\leq |V(G)|$.

A tree $T$ in a vertex-colored graph $H$ is called a \emph{vertex-monochromatic tree}
if all the internal vertices of $T$ have the same color.
For $S\subseteq V(H)$,
a \emph{vertex-monochromatic $S$-tree} in $H$ is a vertex-monochromatic
tree of $H$ containing the vertices of $S$.
For a connected graph $G$ and a given integer $k$ with $2\leq k\leq |V(G)|$,
the \emph{$k$-monochromatic vertex-index $mvx_k(G)$} of $G$ is the maximum number of
colors needed
such that for each subset $S\subseteq V(G)$ of $k$ vertices,
there exists a vertex-monochromatic $S$-tree.
We show that for a given a connected graph $G$, and a positive integer $L$ with $L\leq |V(G)|$,
to decide whether $mvx_k(G)\geq L$ is NP-complete for each integer $k$ such that $2\leq k\leq |V(G)|$.
We also obtain some Nordhaus-Gaddum-type results for the $k$-monochromatic vertex-index.

{\flushleft\bf Keywords}: $k$-monochromatic index, $k$-monochromatic vertex-index, NP-complete, Nordhaus-Gaddum-type result.

{\flushleft\bf AMS subject classification 2010}: 05C15, 05C40, 68Q17, 68Q25, 68R10.

\end{abstract}

\section{Introduction}

All graphs considered in this paper are simple, finite, undirected and connected.
We follow the terminology and notation of Bondy and Murty \cite{Bondy}.
A path in an edge-colored graph $H$ is a \emph{monochromatic path}
if all the edges of the path are colored with the same color.
The graph $H$ is called \emph{monochromatically connected}£¬
if for any two vertices of $H$ there exists a monochromatic path connecting them.
An edge-coloring of $H$ is a \emph{monochromatically connecting coloring}
(\emph{MC-coloring}) if it makes $H$ monochromatically connected.
How colorful can an MC-coloring be?
This question is the natural opposite of the well-studied problem of
rainbow connecting coloring \cite{Caro1,Chartrand1,Krivelevich and Yuster,Sun,LiSun2},
where in the latter we seek to find an edge-coloring with minimum number of colors
so that there is a rainbow path joining any two vertices.
For a connected graph $G$, the \emph{monochromatic connection number} of $G$, denoted by $mc(G)$,
is the maximum number of colors that are needed in order to make $G$
monochromatically connected.
An \emph{extremal MC-coloring} is an MC-coloring that uses $mc(G)$ colors.
These above concepts were introduced by Caro and Yuster in \cite{Caro}.
They obtained some nontrivial lower and upper bounds for $mc(G)$.
Later, Cai et al. in \cite{Di} obtained two kinds of Erd\H{o}s-Gallai-type results for $mc(G)$.

In this paper, we generalizes the concept of a monochromatic path to a monochromatic tree. In this way, we can
give the monochromatic connection number a natural generalization.
A tree $T$ in an edge-colored graph $H$ is called a \emph{monochromatic tree} if all the edges of $T$ have the same color. For an $S\subseteq V(H)$,
a \emph{monochromatic $S$-tree} in $H$ is a monochromatic tree of $H$ containing the vertices of $S$.
Given an integer $k$ with $2\leq k\leq |V(H)|$, the graph $H$ is called \emph{$k$-monochromatically connected}
if for any set $S$ of $k$ vertices of $H$, there exists a monochromatic $S$-tree in $H$.
For a connected graph $G$ and a given integer $k$ such that $2\leq k\leq |V(G)|$,
the \emph{$k$-monochromatic index $mx_k(G)$} of $G$ is the maximum number of
colors that are needed in order to make $G$ $k$-monochromatically connected.
An edge-coloring of $G$ is called a \emph{$k$-monochromatically connecting coloring} (\emph{$MX_k$-coloring}) if it makes $G$ $k$-monochromatically connected.
An \emph{extremal $MX_k$-coloring} is an $MX_k$-coloring that uses $mx_k(G)$ colors.
When $k=2$, we have $mx_2(G)=mc(G)$. Obviously, we have $mx_{|V(G)|}(G)\leq \ldots\leq mx_3(G)\leq mc(G)$.

There is a vertex version of the monochromatic connection number, which was introduced
by Cai et al. in \cite{Cai}. A path in a vertex-colored graph $H$ is a \emph{vertex-monochromatic path}
if its internal vertices are colored with the same color.
The graph $H$ is called \emph{monochromatically vertex-connected},
if for any two vertices of $H$ there exists a vertex-monochromatic path connecting them.
For a connected graph $G$, the \emph{monochromatic vertex-connection number} of $G$, denoted by $mvc(G)$,
is the maximum number of colors that are needed in order to make $G$
monochromatically vertex-connected.
A vertex-coloring of $G$ is a \emph{monochromatically vertex-connecting coloring}
(\emph{MVC-coloring}) if it makes $G$ monochromatically vertex-connected.
An \emph{extremal MVC-coloring} is an MVC-coloring that uses $mvc(G)$ colors.
This $k$-monochromatic index can also have a natural vertex version.
A tree $T$ in a vertex-colored graph $H$ is called a \emph{vertex-monochromatic tree} if its internal vertices have the same color.
For an $S\subseteq V(H)$, a \emph{vertex-monochromatic $S$-tree} in $H$ is a vertex-monochromatic tree of $H$ containing the vertices of $S$.
Given an integer $k$ with $2\leq k\leq |V(H)|$, the graph $H$ is called \emph{$k$-monochromatically vertex-connected} if for any set $S$ of $k$ vertices of $H$, there
exists a vertex-monochromatic $S$-tree in $H$.
For a connected graph $G$ and a given integer $k$ such that $2\leq k\leq |V(G)|$,
the {\it $k$-monochromatic vertex-index} $mvx_k(G)$ of $G$ is the maximum number of
colors that are needed in order to make $G$ $k$-monochromatically vertex-connected.
A vertex-coloring of $G$ is called a
\emph{$k$-monochromatically vertex-connecting coloring} (\emph{$MVX_k$-coloring}) if it makes $G$ $k$-monochromatically vertex-connected.
An \emph{extremal $MVX_k$-coloring} is an $MVX_k$-coloring that uses $mvx_k(G)$ colors.
When $k=2$, we have $mvx_2(G)=mvc(G)$.
Obviously, we have $mvx_{|V(G)|}(G)\leq \ldots\leq mvx_3(G)\leq mvc(G)$.

A {\it Nordhaus-Gaddum-type result} is a (tight) lower or upper bound on the sum or
product of the values of a parameter for a graph and its complement.
The Nordhaus-Gaddum-type is given because Nordhaus and Gaddum \cite{Nordhaus}
first established the following inequalities for the chromatic numbers of graphs:
If $G$ and $\overline{G}$ are complementary graphs on $n$ vertices
whose chromatic numbers are $\chi(G)$ and $\chi(\overline{G})$, respectively,
then $2\sqrt{n}\leq \chi{(G)}+\chi{(\overline{G})}\leq n+1$.
Since then, many analogous inequalities of other graph parameters are concerned, such
as domination number \cite{Harary}, Wiener index and some other chemical indices \cite{B. Wu},
rainbow connection number \cite{X. Li}, and so on.

In this paper, we will prove that for any connected graph $G$, $mx_k(G)=|E(G)|-|V(G)|+2$
for each $k$ such that $3\leq k\leq |V(G)|$. For the vertex version parameter $mvx_k(G)$,
we first show that for a given a connected graph $G$, and a positive integer $L$ with $L\leq |V(G)|$,
to decide whether $mvx_k(G)\geq L$ is NP-complete for each integer $k$ such that $2\leq k\leq |V(G)|$.
Then, we obtain some Nordhaus-Gaddum-type results.

\section{Determining $mx_k(G)$}

Let $G$ be a connected graph with $n$ vertices and $m$ edges.
In this section, we mainly study $mx_k(G)$ for each $k$ with $3\leq k\leq n$.
A straightforward lower bound for $mx_k(G)$ is $m-n+2$.
Just give the edges of a spanning tree of $G$ with one color,
and give each of the remaining edges a distinct new color.
A property of an extremal $MX_k$-coloring is that the edges with
each color forms a tree for any $k$ with $3\leq k\leq n$.
In fact, if an $MX_k$-coloring contains a monochromatic cycle,
we can choose any edge of this cycle and give it a new color
while still maintaining an $MX_k$-coloring;
if the subgraph induced by the edges with a given color is disconnected,
then we can give the edges of one component with a new color
while still maintaining an $MX_k$-coloring for each $k$ with $3\leq k\leq n$.
Then, we use \emph{color tree $T_c$} to denote the the tree consisting of the edges colored with $c$.
The color $c$ is called \emph{nontrivial} if $T_c$ has at least two edges; otherwise $c$ is called \emph{trivial}.
We now introduce the definition of \emph{a simple extremal $MX_k$-coloring},
which is generalized of \emph{a simple extremal MC-coloring} defined in \cite{Caro}.

Call an extremal $MX_k$-coloring \emph{simple} for a $k$ with $3\leq k\leq n$,
if for any two nontrivial colors $c$ and $d$,
the corresponding $T_c$ and $T_d$ intersect in at most one vertex.
The following lemma shows that a simple extremal $MX_k$-coloring always exists.

\begin{lem}\label{lem simple}
Every connected graph $G$ on $n$ vertices has a simple extremal $MX_k$-coloring for each $k$ with $3\leq k\leq n$.
\end{lem}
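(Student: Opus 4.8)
The plan is to start from an arbitrary extremal $MX_k$-coloring and, using the structural remark preceding the lemma, assume that each color class forms a color tree $T_c$. Among all extremal $MX_k$-colorings (those using $mx_k(G)$ colors in which every color class is a tree), I would fix one that minimizes the number $N$ of nontrivial colors. The goal is to show that this particular coloring must already be simple; the whole argument then reduces to analyzing a single recoloring move applied to a hypothetical pair of overlapping nontrivial trees.

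Suppose, for contradiction, that two nontrivial colors $c$ and $d$ satisfy $t := |V(T_c) \cap V(T_d)| \ge 2$. Since $T_c$ and $T_d$ are each connected and share at least one vertex, their union $F = T_c \cup T_d$ is connected; and because they are edge-disjoint, a direct count gives cyclomatic number $|E(F)| - |V(F)| + 1 = t - 1 \ge 1$, so $F$ contains cycles. The move is to discard the colors $c$ and $d$ on $F$, choose a spanning tree $T$ of $F$, color all of its edges with one new color, and give each of the remaining $t - 1$ edges of $F$ its own distinct new color. This replaces the two old colors on $F$ by exactly $1 + (t-1) = t$ colors, while every other color class is left untouched.

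The key point to verify is that the new coloring is still an $MX_k$-coloring. This holds because $V(T) = V(F) = V(T_c) \cup V(T_d)$, so any set $S$ of $k$ vertices that was contained in $V(T_c)$ or in $V(T_d)$ is now contained in the single monochromatic tree $T$; sets previously covered by other, unchanged color trees remain covered. With this in hand the contradiction is immediate. If $t \ge 3$, the new coloring is an $MX_k$-coloring using $t - 2 \ge 1$ more colors than before, contradicting the extremality (maximality) of the original. If $t = 2$, the new coloring uses the same number of colors and is therefore still extremal, but it replaces the two nontrivial trees $T_c, T_d$ by a single nontrivial tree $T$ (which has $|E(T_c)| + |E(T_d)| - 1 \ge 2$ edges) together with one trivial color, strictly decreasing $N$ and contradicting the minimality of $N$.

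I expect the main obstacle to be exactly the bookkeeping in this last verification: confirming that the merge-and-resplit move never destroys coverage of any $k$-set (which is what forces the use of a spanning tree of the whole union rather than an arbitrary resplitting), and correctly separating the two regimes, using maximality of the color count to kill the case $t \ge 3$ and the auxiliary minimality of $N$ to kill the remaining case $t = 2$. Since both regimes lead to a contradiction, no two nontrivial color trees can meet in more than one vertex, which is precisely the assertion that the chosen coloring is simple.
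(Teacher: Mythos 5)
Your proof is correct and takes essentially the same approach as the paper: the identical merge-and-resplit move on $T_c\cup T_d$ (recolor a spanning tree with one color, give the remaining $t-1$ edges distinct new colors), with extremality handling $t\geq 3$ and a secondary optimization handling $t=2$. Your tie-breaking rule of minimizing the number of nontrivial colors is equivalent to the paper's rule of maximizing the number of trivial colors, since every extremal coloring uses exactly $mx_k(G)$ colors in total.
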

\begin{proof}
Let $f$ be an extremal $MX_k$-coloring with the most number of trivial colors for each $k$ with $3\leq k\leq n$.
Suppose $f$ is not simple. By contradiction,
assume that $c$ and $d$ are two nontrivial colors such that $T_c$ and $T_d$ contain $p$ common vertices with  $p\geq 2$. Let $H=T_c\cup T_d$. Then, $H$ is connected.
Moreover, $|V(H)|=|V(T_c)|+|V(T_d)|-p$, and $|E(H)|=|V(T_c)|+|V(T_d)|-2$.
Now color a spanning tree of $H$ with $c$,
and give each of the remaining $p-1$ edges of $H$ distinct new colors.
The new coloring is also an $MX_k$-coloring for each $k$ with $3\leq k\leq n$.
If $p>2$, then the new coloring uses more colors than $f$, contradicting that $f$ is extremal.
If $p=2$, then the new coloring uses the same number of colors as $f$ but more trivial colors,
contracting that $f$ contains the most number of trivial colors.
\end{proof}

By using this lemma, we can completely determine $mx_k(G)$ for each $k$ with $3\leq k\leq n$.

\begin{thm}
Let $G$ be a connected graph with $n$ vertices and $m$ edges,
then $mx_k(G)=m-n+2$ for each $k$ with $3\leq k\leq n$.
\end{thm}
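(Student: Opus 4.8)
The plan is to combine the lower bound already recorded in the text with a matching upper bound. Since the spanning-tree construction shows $mx_k(G)\ge m-n+2$ for every $k$, it suffices to prove $mx_k(G)\le m-n+2$ for each $k$ with $3\le k\le n$. By Lemma~\ref{lem simple} I may fix a simple extremal $MX_k$-coloring $f$ and work entirely with its color trees, which partition $E(G)$; write $T_1,\dots,T_c$ for these trees, where $c=mx_k(G)$, and let $n_i=|V(T_i)|$. As noted before the lemma, each color class is a tree, so $|E(T_i)|=n_i-1$ and hence $m=\sum_i(n_i-1)$, which rearranges to $\sum_i(n_i-2)=m-c$. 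Thus the upper bound $c\le m-n+2$ is equivalent to the inequality $\sum_i(n_i-2)\ge n-2$, and the whole problem reduces to producing enough total ``excess'' in the vertex sizes of the color trees.

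The key structural observation I would establish is that a single color tree must already span $G$. First, a trivial color tree is a single edge and so contains only two vertices; since $k\ge 3$, no $k$-subset of $V(G)$ can lie in a trivial tree, so the $MX_k$-property forces every $k$-subset $S$ to be contained in the vertex set of some \emph{nontrivial} tree. Now fix two vertices $a,b$ and, for each remaining vertex $x$, choose a $k$-set $S_x$ with $\{a,b,x\}\subseteq S_x$ (possible because $3\le k\le n$); let $T_{i(x)}$ be a nontrivial tree with $S_x\subseteq V(T_{i(x)})$. For $x\ne y$ the trees $T_{i(x)}$ and $T_{i(y)}$ both contain $a$ and $b$, so their vertex sets meet in at least two points; since $f$ is simple, any two distinct nontrivial trees intersect in at most one vertex, and therefore $i(x)=i(y)$. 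Hence all of these choices give the same tree $T_{i_0}$, and $V(T_{i_0})$ contains $a$, $b$, and every other vertex $x$, i.e. $n_{i_0}=n$.

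With a spanning color tree in hand the count closes immediately: every $n_i\ge 2$, so $\sum_i(n_i-2)\ge n_{i_0}-2=n-2$, giving $c\le m-n+2$ and hence equality with the lower bound. I would then note that the argument is uniform in $k$; alternatively one could prove the upper bound only for $k=3$ and deduce the remaining cases from the monotonicity $mx_n(G)\le\cdots\le mx_3(G)$ together with the common lower bound $m-n+2$. The main obstacle is isolating the spanning-tree phenomenon: the real content is that the simplicity of the coloring, namely pairwise intersections of size at most one, is incompatible with spreading the $k$-sets across several nontrivial trees, and this is precisely where the hypothesis $k\ge 3$ rather than $k=2$ does the work—for $k=2$ a single edge can host a pair and no spanning tree need appear, which is exactly why $mc(G)$ can exceed $m-n+2$.
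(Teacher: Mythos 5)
Your proof is correct and follows essentially the same route as the paper: both invoke Lemma~\ref{lem simple} to get a simple extremal coloring, both use the ``two fixed vertices plus a varying third vertex'' trick to force all relevant nontrivial color trees to coincide (via the at-most-one-vertex intersection property), yielding a spanning color tree, and both close with the edge count $c\le m-n+2$. The only cosmetic differences are that you argue directly for every $k$ (where the paper treats $k=3$ by contradiction and then appeals to the monotonicity $mx_n\le\cdots\le mx_3$) and that you package the final count as the identity $\sum_i(n_i-2)=m-c$; neither changes the substance.
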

\begin{proof}
Let $f$ be a simple extremal $MX_3$-coloring of $G$.
Choose a set $S$ of $3$ vertices of $G$. Then, there exists a monochromatic $S$-tree in $G$.
Since $|S|=3$, then this monochromatic $S$-tree is contained in some nontrivial color tree $T_c$.
Suppose that the color tree $T_c$ is not a spanning tree of $G$.
Choose $v\notin V(T_c)$, and $\{u,w\}\subseteq V(T_c)$.
Let $S'=\{v,u,w\}$. Then, there exists a monochromatic $S'$-tree in $G$.
Since $|S'|=3$, then this monochromatic $S'$-tree is contained in some nontrivial color tree $T_{d}$.
Moreover, since $v\notin V(T_c)$, then $c\neq d$.
But now, $\{u,w\}\in V(T_c)\cap V(T_{d})$, contracting that $f$ is simple.
Then, we have that $T_c$ is a spanning tree of $G$.
Hence, $m-n+2\leq mx_n(G)\leq\ldots\leq mx_3(G)\leq m-n+2$.
The theorem thus follows.
\end{proof}

\section{Hardness results for computing $mvx_k(G)$}

Through we can completely determine the value of $mx_k(G)$ for each $k$ with
$3\leq k\leq n$, for the vertex version it is difficult to compute $mvx_k(G)$ for any $k$ with
$2\leq k\leq n$. In this section, we will show that
given a connected graph $G=(V,E)$, and a positive integer $L$ with $L\leq |V|$,
to decide whether $mvx_k(G)\geq L$ is NP-complete for each $k$ with $2\leq k\leq |V|$.

We first introduce some definitions. A subset $D\subseteq V(G)$ is a \emph{dominating set}
of $G$ if every vertex not in $D$ has a neighbor in $D$.
If the subgraph induced by $D$ is connected, then $D$ is called a \emph{connected dominating set}.
The \emph{dominating number $\gamma(G)$}, and the \emph{connected dominating number $\gamma_c(G)$},
is the cardinalities of a minimum dominating set, and a minimum connected dominating set, respectively.
A graph $G$ has a connected dominating set if and only if $G$ is connected.
The problem of computing $\gamma_c(G)$ is equivalent to the problem of finding a spanning tree with the most number of leaves,
because a vertex subset is a connected dominating set if and only if its complement is contained in the set of leaves of a spanning tree.
Let $G$ be a connected graph on $n$ vertices where $n\geq 3$.
Note that the problem of computing $mvx_n(G)$ is also equivalent to the problem of finding a spanning tree with the most number of leaves.
In fact, let $T_{max}$ be a spanning tree of $G$ with the most number of leaves, and $l(T_{max})$ be the number of leaves in $T_{max}$.
Then, $mvx_n(G)=l(T_{max})+1=n-\gamma_c(G)+1$ for $n\geq 3$.
For convenience, suppose that all the graphs in this section have at least $3$ vertices.

Now we introduce a useful lemma.
For convenience, call a tree $T$ \emph{with vertex-color $c$} if the internal vertices of $T$ are colored with $c$.
\begin{lem}\label{lem MVCut}
Let $G$ be a connected graph on $n$ vertices with a cut-vertex $v_0$.
Then, $mvc(G)=l(T_0)+1$, where $T_0$ is a spanning tree of $G$ with the most number of leaves.
\end{lem}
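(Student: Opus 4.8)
The plan is to prove the two inequalities $mvc(G)\ge l(T_0)+1$ and $mvc(G)\le l(T_0)+1$ separately, with the cut-vertex entering only in the second. For the lower bound I would color the internal vertices of $T_0$ with one single color and give each of the $l(T_0)$ leaves its own fresh color. Since any $u,w\in V(G)$ are joined in $T_0$ by a path whose interior vertices are internal vertices of $T_0$, that path is vertex-monochromatic, so this is an MVC-coloring using $l(T_0)+1$ colors and $mvc(G)\ge l(T_0)+1$. (Equivalently, this is just $mvc(G)=mvx_2(G)\ge mvx_n(G)=l(T_0)+1$, using the monotonicity and the value $mvx_n(G)=n-\gamma_c(G)+1$ recorded before the lemma.)

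The real work is the upper bound. Fix an extremal MVC-coloring of $G$, let $c_0$ be the color of $v_0$, and let $W$ be the set of all vertices colored $c_0$, so $v_0\in W$. The central observation I would establish first is the following: if $u,w$ lie in distinct components of $G-v_0$, then every $u$--$w$ path passes through $v_0$, so on the vertex-monochromatic $u$--$w$ path provided by the coloring, $v_0$ is an \emph{internal} vertex; as that path is vertex-monochromatic, all of its internal vertices share the color of $v_0$, namely $c_0$, and hence lie in $W$.

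From this single observation I would deduce that $W$ is a connected dominating set of $G$. For domination, given any $u$ in a component $C_i$ of $G-v_0$, I choose $w$ in another component (possible since $v_0$ is a cut-vertex, so $G-v_0$ has at least two components); the neighbor of $u$ along the monochromatic $u$--$w$ path is either $v_0$ or an internal vertex of that path, so it lies in $W$, giving $u$ a neighbor in $W$. For connectivity, given $x\in W\cap C_i$, I again pick $w$ in another component: the segment of the monochromatic $x$--$w$ path running from $x$ up to $v_0$ stays inside $C_i\cup\{v_0\}$, its interior vertices are colored $c_0$, and its two ends $x$ and $v_0$ already lie in $W$, so this segment is a path inside $G[W]$ joining $x$ to $v_0$. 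Thus every vertex of $W$ reaches $v_0$ within $G[W]$, so $G[W]$ is connected.

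Finally I would convert this into a color count. Since $W$ is a connected dominating set, $|W|\ge\gamma_c(G)$, so $|V(G)\setminus W|\le n-\gamma_c(G)=l(T_0)$. Each color other than $c_0$ is used on at least one vertex of $V(G)\setminus W$, and selecting one such vertex per color yields an injection from the non-$c_0$ colors into $V(G)\setminus W$; hence the total number of colors is at most $1+|V(G)\setminus W|\le l(T_0)+1$, giving $mvc(G)\le l(T_0)+1$. I expect the only delicate point to be the central observation forcing every internal vertex of a cross-component monochromatic path to be colored $c_0$; once that is in hand both the domination and the connectivity of $W$ fall out of the same argument, and the passage to a color bound via $\gamma_c(G)$ is routine given the equivalence $l(T_0)=n-\gamma_c(G)$ stated before the lemma.
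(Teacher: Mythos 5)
Your proof is correct, and it rests on exactly the same key observation as the paper's: because $v_0$ is a cut-vertex, every vertex-monochromatic path joining vertices in distinct components of $G-v_0$ must pass through $v_0$ as an internal vertex, and hence all of its internal vertices receive the color $c_0$ of $v_0$. The difference lies in how this observation is converted into the upper bound. The paper uses it constructively: it repeatedly splices such monochromatic paths (cutting each at its first and last intersection with the tree built so far) to assemble a spanning tree $T'$ all of whose internal vertices are colored $c_0$, deduces $mvc(G)\le l(T')+1$, and then closes with a squeeze $l(T_{max})+1=mvx_n(G)\le mvc(G)\le l(T')+1\le l(T_{max})+1$. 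You instead show that the entire color class $W$ of $c_0$ is a connected dominating set, with both domination and connectivity falling out of the same observation, and then count colors: $mvc(G)\le 1+|V\setminus W|\le 1+n-\gamma_c(G)=l(T_0)+1$. By the correspondence the paper records just before the lemma (a vertex subset is a connected dominating set if and only if its complement is contained in the leaf set of some spanning tree), the two arguments are mathematically equivalent; but your packaging buys some robustness. It avoids the paper's tree surgery, whose correctness requires checking that the union of path segments remains a tree and that attachment vertices which become internal are still colored $c_0$, and by taking $T_0$ to be a maximum-leaf spanning tree from the outset you need no squeeze at the end. Your explicit lower-bound coloring (one color on the internal vertices of $T_0$, fresh colors on the leaves) also makes $mvc(G)\ge l(T_0)+1$ self-contained, where the paper simply cites $mvc(G)\ge mvx_n(G)=l(T_{max})+1$.
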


\begin{proof}
Let $f$ be an extremal $MVC$-coloring of $G$.
Suppose that $f(v)$ is the color of the vertex $v$, and $f(v_0)=c$.
Let $G_1,G_2,\ldots ,G_p$
be the components of $G-v_0$ where $p\geq 2$.
We construct a spanning
tree $T_0$ of $G$ with vertex-color $c$ as follows.
At first, choose any pair $(v_i,v_j)\in (V(G_i),V(G_j))(i\neq j)$.
Since $v_0$ is a cut-vertex, then there must exist a
$\{v_i,v_j\}$-path $P$
containing $v_0$ with vertex-color $c$.
Initially, set $T_0=P$.
Secondly, choose another pair $(v_s,v_t)\in (V(G_s),V(G_t))(s\neq t)$
such that $v_s$ is not in $T_0$.
Similarly, there must exist a
$\{v_s,v_t\}$-path $P'$ containing $v_0$ with vertex-color $c$.
Let $x$ be the first vertex of $P'$ that is also in $T_0$,
and $y$ be the last vertex of $P'$ that is also in $T_0$.
Then, reset $T_0=T_0\cup v_sP'x\cup yP'v_t$.
Thus, $T_0$ is still a tree with vertex-color $c$ now.
Repeat the above process until all vertices are contained in $T_0$.
Finally, we get a spanning tree $T_0$ of $G$ with vertex-color $c$.
Thus, we have $mvc(G)\leq l(T_0)+1$ now.
However, $mvc(G)\geq mvx_n(G)=l(T_{max})+1$, where $T_{max}$ is a spanning
tree of $G$ with the most number of leaves.
Then, we have $l(T_0)=l(T_{max})$.
Hence, it follows that $mvc(G)=l(T_0)+1$.
\end{proof}

\begin{cor}\label{cor MVCut}
Let $G$ be a connected graph on $n$ vertices with a cut-vertex.
Then, $mvx_k(G)=l(T_{maz})+1$ for each $k$ with $2\leq k\leq n$,
where $T_{max}$ is a spanning tree of $G$ with the most number of leaves.
\end{cor}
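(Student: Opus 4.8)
The plan is to use a sandwich argument resting on the monotonicity chain for $mvx_k$ together with the two endpoint evaluations already available. First I would recall the relation noted in the introduction, namely
\[
mvx_n(G)\le mvx_{n-1}(G)\le\cdots\le mvx_3(G)\le mvx_2(G)=mvc(G).
\]
This chain reduces the claim for every intermediate $k$ to identifying only the two extreme terms $mvx_2(G)$ and $mvx_n(G)$: if both equal $l(T_{max})+1$, everything squeezed between them is forced to the same value.

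For the upper endpoint $mvx_2(G)=mvc(G)$, I would invoke Lemma \ref{lem MVCut}. Since $G$ is assumed to have a cut-vertex, that lemma applies and gives $mvc(G)=l(T_0)+1$; moreover its proof establishes $l(T_0)=l(T_{max})$, so $mvx_2(G)=mvc(G)=l(T_{max})+1$. For the lower endpoint $mvx_n(G)$, I would use the observation recorded just before the lemma, that for $n\ge 3$ computing $mvx_n(G)$ coincides with the maximum-leaf spanning tree problem, yielding $mvx_n(G)=l(T_{max})+1$ directly and with no extra hypothesis.

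The concluding step is then purely the squeeze: both ends of the monotone chain equal $l(T_{max})+1$, hence every term between them, i.e. $mvx_k(G)$ for each $k$ with $2\le k\le n$, equals $l(T_{max})+1$ as well. I do not expect a genuine obstacle here; the only point requiring care is that the two endpoint evaluations really hold under the stated hypotheses — the cut-vertex assumption is exactly what licenses Lemma \ref{lem MVCut} to handle the $k=2$ end, while the $k=n$ end is unconditional. The substantive content of the corollary is thus entirely supplied by Lemma \ref{lem MVCut}; the corollary merely records that the presence of a cut-vertex collapses the whole monotone family $mvx_2(G),\dots,mvx_n(G)$ to the single value $l(T_{max})+1$.
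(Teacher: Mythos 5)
Your proposal is correct and matches the paper's (implicit) argument: the paper states this corollary without proof precisely because it follows immediately from Lemma \ref{lem MVCut} at the $k=2$ end, the observation that $mvx_n(G)=l(T_{max})+1$ at the $k=n$ end, and the monotone chain $mvx_n(G)\le\cdots\le mvx_2(G)=mvc(G)$. Your squeeze argument spells out exactly this reasoning, including the needed fact from the lemma's proof that $l(T_0)=l(T_{max})$.
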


Now, we show that the following Problem $0$ is NP-complete.

\noindent{\bf Problem $0$:} $k$-monochromatic vertex-index

\noindent Instance: Connected graph $G=(V,E)$, a positive integer $L$ with $L\leq |V|$.

\noindent Question: Deciding whether $mvx_k(G)\geq L$ for each $k$ with $2\leq k\leq |V|$.

In order to prove the NP-completeness of Problem $0$,
we first introduce the following problems.

\noindent{\bf Problem $1$:} Dominating Set.

\noindent Instance: Graph $G=(V,E)$, a positive integer $K\leq |V|$.

\noindent Question: Deciding wether there is a dominating set of size $K$ or less.

\noindent{\bf Problem $2$:} CDS of a connected graph containing a cut-vertex.

\noindent Instance: Connected graph $G=(V,E)$ with a cut-vertex, a positive integer $K$ with $K\leq |V|$.

\noindent Question: Deciding wether there is a connected dominating set of size $K$ or less.

The NP-completeness of Problem $1$ is a known result in \cite{Garey}.
In the following, we will reduce Problem $1$ to Problem $2$ polynomially.

\begin{lem}\label{lem CDS}
Problem $1$ $\preceq$ Problem $2$.
\end{lem}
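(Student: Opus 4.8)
The plan is to exhibit a polynomial-time reduction that, given an instance $(G,K)$ of Problem $1$ with $G=(V,E)$ and $V=\{v_1,\ldots,v_n\}$, outputs an instance $(H,K+1)$ of Problem $2$. First I would build $H$ as follows: introduce a disjoint copy $V'=\{v_1',\ldots,v_n'\}$ of $V$ together with one extra vertex $c$, and turn $V'\cup\{c\}$ into a clique; attach a single pendant vertex $p$ whose only neighbor is $c$; keep $V$ as an independent set; and finally, for every $i$, join $v_i'$ to each vertex of the closed neighborhood $N_G[v_i]$ (so in particular $v_i'\sim v_i$). This $H$ is constructible in time polynomial in the size of $G$, it is connected since each $v_i\in V$ hangs off the clique through $v_i'$, and $c$ is a cut-vertex because deleting $c$ isolates $p$ from $V\cup V'$. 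Thus $H$ is a legitimate instance of Problem $2$, and since $K\le n$ the required bound $K+1\le |V(H)|=2n+2$ also holds.

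The heart of the argument is the identity $\gamma_c(H)=\gamma(G)+1$, which makes $(G,K)$ a yes-instance of Problem $1$ exactly when $(H,K+1)$ is a yes-instance of Problem $2$. The easy direction is to start from a dominating set $D$ of $G$ with $|D|\le K$ and verify that $\{v_i':v_i\in D\}\cup\{c\}$ is a connected dominating set of $H$: it is connected because it lies in the clique $V'\cup\{c\}$, it dominates $p$ and all of $V'$ through $c$ and the clique, and it dominates each $v_j\in V$ because, by the choice of $D$, some $v_i'$ with $v_j\in N_G[v_i]$ is present. Hence $\gamma_c(H)\le \gamma(G)+1$.

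The harder direction, which I expect to be the main obstacle, is to recover a small dominating set of $G$ from an arbitrary connected dominating set $D^{\ast}$ of $H$ with $|D^{\ast}|\le K+1$. The difficulty is that $D^{\ast}$ may use vertices of the independent set $V$ or the pendant $p$, which have no clean counterpart in $G$, so I would first normalize $D^{\ast}$ without increasing its size: replacing any $v_j\in D^{\ast}\cap V$ by its copy $v_j'$ is safe, since $v_j'$ dominates $N_G[v_j]$ together with the whole clique and preserves connectivity; and if $p\in D^{\ast}$ then connectivity forces $c\in D^{\ast}$, so $p$ can simply be discarded. After normalization $D^{\ast}\subseteq V'\cup\{c\}$, and dominating $p$ forces $c\in D^{\ast}$. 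Because $c$ is adjacent to no vertex of $V$, the remaining set $D^{\ast}\cap V'$ must dominate every $v_j\in V$ by itself, and since $v_j$ is adjacent only to copies $v_i'$ with $v_j\in N_G[v_i]$, the set $\{v_i:v_i'\in D^{\ast}\}$ is a dominating set of $G$ of size $|D^{\ast}|-1\le K$. Combining the two directions gives $\gamma_c(H)=\gamma(G)+1$ and completes the reduction, establishing Problem $1\preceq$ Problem $2$.
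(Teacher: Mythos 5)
Your reduction is correct and follows essentially the same strategy as the paper's: a layer of copy vertices, one per $v_i$ and each joined to the closed neighborhood $N_G[v_i]$, an apex vertex attached to all copies, a pendant vertex forcing that apex to be a cut-vertex, and the threshold shifted from $K$ to $K+1$, with the same two-directional argument giving $\gamma(G)\leq K \Leftrightarrow \gamma_c(H)\leq K+1$. The only differences are cosmetic: you turn the copies into a clique and delete the original edges of $G$ (which is why you need the extra normalization step in the backward direction), whereas the paper keeps $E$ on $V$ and connects the copies to the apex as a star, allowing it to project any connected dominating set back to $G$ directly.
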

\begin{proof}
Given a graph $G$ with vertex set $V=\{v_1,v_2,\ldots,v_n\}$ and edge set $E$,
we construct a graph $G'=(V',E')$ as follows:
\begin{align*}
V'= &V\cup \{u_1,u_2,\ldots,u_n\}\cup\{x,y\}\\
E'= & E\cup E_1\cup E_2\\
E_1= & \{u_iv: \text{if $v=v_i$ or $v_iv$ is an edge in $G$ for $1\leq i\leq n$}\}\\
E_2= &\{xu_i: 1\leq i\leq n\}\cup \{xy\}
\end{align*}
It is easy to check that $G'$ is connected with a cut-vertex $x$.
In the following, we will show that $G$ contains a dominating set of size $K$ or less
if and only if $G'$ contains a connected dominating set of size $K+1$ or less.
On one hand, suppose w.l.o.g that $G$ contains a dominating set
$D=\{v_1,v_2,\ldots,v_t\},t\leq K$.
Let $D'=\{u_1,u_2,\ldots,u_t\}\cup\{x\}$.
Then, it is easy to check that $D'$ is a connected dominating set of $G'$ and $|D'|\leq K+1$.
On the other hand, suppose that $G'$ contains a connected dominating set $D'$ of size $K+1$ or less.
Since $x$ is a cut-vertex of $G'$, then $x\in D'$.
For $1\leq i\leq n$, if $u_i\in D'$ or $v_i\in D'$, then put $v_i$ in $D$.
It is easy to check that $D$ is a dominating set of $G$ and $|D|\leq K$.
\end{proof}

\begin{thm}
 Problem $0$ is NP-complete.
\end{thm}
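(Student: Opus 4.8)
The plan is to show that Problem $0$ lies in NP and that it is NP-hard, and hence NP-complete. Throughout I treat $k$ as fixed, so that ``Problem $0$'' is really one decision problem for each value of $k$ with $2\leq k\leq |V|$, exactly as the statement requires. For the hardness direction I would give a polynomial reduction from Problem $2$, which is already NP-hard by Lemma \ref{lem CDS} together with the known NP-completeness of Problem $1$.

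For membership in NP, I would take as a certificate a vertex-coloring of $G$ using at least $L$ colors. Counting the colors is immediate, so the issue is verifying in polynomial time that the coloring is an $MVX_k$-coloring. The naive test inspects every $k$-subset $S$, and for fixed $k$ there are only $O(n^k)$ of these, so it suffices to argue that a single test is polynomial. The key observation I would prove is a structural characterization: the internal vertices of any vertex-monochromatic tree of color $c$ induce a connected subgraph of $G[V_c]$ (where $V_c$ denotes the vertices colored $c$, and I use $N[R]$ for the closed neighborhood of $R$), so a vertex-monochromatic $S$-tree of color $c$ exists if and only if $S\subseteq N[R]$ for some component $R$ of $G[V_c]$. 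Since the components of the subgraphs $G[V_c]$ over all colors $c$ partition $V(G)$, there are at most $n$ such closed neighborhoods; one precomputes this polynomial-size family and tests each $S$ against it, which makes the whole verification run in polynomial time for every fixed $k$.

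For NP-hardness I would reduce Problem $2$ to Problem $0$. Given an instance $(G,K)$ of Problem $2$, where $G$ is a connected graph on $n$ vertices with a cut-vertex, I keep the same graph $G$ and set $L=n-K+1$; this is computable in polynomial time and satisfies $1\leq L\leq n$. By Corollary \ref{cor MVCut}, $mvx_k(G)=l(T_{max})+1$ for every $k$ with $2\leq k\leq n$, and since a spanning tree with the maximum number of leaves satisfies $l(T_{max})=n-\gamma_c(G)$, we obtain $mvx_k(G)=n-\gamma_c(G)+1$. Hence $\gamma_c(G)\leq K$ if and only if $mvx_k(G)\geq n-K+1=L$, so $(G,K)$ is a yes-instance of Problem $2$ precisely when $(G,L)$ is a yes-instance of Problem $0$. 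Combined with Lemma \ref{lem CDS} and the NP-completeness of Problem $1$, this shows Problem $0$ is NP-hard, and with the previous paragraph it is NP-complete.

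I expect the main obstacle to be the NP-membership argument rather than the reduction: once Corollary \ref{cor MVCut} is available the reduction is essentially forced, whereas a candidate coloring cannot be verified by brute force because a direct ``for all $S$'' check inspects too many sets when $k$ grows with $n$. The crux is therefore to fix $k$ and to replace the tree search by the component/closed-neighborhood characterization above, which simultaneously keeps the number of sets to be tested and the cost of each test polynomial; establishing that characterization carefully (including the small-tree cases where a tree has at most one internal vertex) is where I would spend the most effort.
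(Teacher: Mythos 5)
Your NP-hardness argument is exactly the paper's proof: the paper likewise combines Corollary \ref{cor MVCut} with Lemma \ref{lem CDS}, noting that for a connected graph $G$ with a cut-vertex, $\gamma_c(G)\leq K$ if and only if $mvx_k(G)=|V|-\gamma_c(G)+1\geq |V|-K+1$, so Problem $2$ reduces to Problem $0$ by keeping $G$ and setting $L=|V|-K+1$. Where you differ is the membership half: the paper simply asserts NP-completeness and never verifies that Problem $0$ is in NP, whereas you supply a certificate-checking procedure. Your characterization --- a vertex-monochromatic $S$-tree with internal color $c$ exists iff $S\subseteq N[R]$ for some component $R$ of $G[V_c]$ --- is correct (the internal vertices of a tree induce a subtree, hence lie in one component $R$; conversely a spanning tree of $R$ with pendant edges to $S\setminus R$ works; and the degenerate trees with no internal vertices are absorbed because every vertex of $S$ lies in some component of its own color class), and since the components over all colors partition $V(G)$ there are at most $n$ sets $N[R]$ to precompute, so each $k$-set is tested in polynomial time. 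This is a genuine improvement on the paper's write-up. One caveat: your verification enumerates all $\binom{n}{k}$ subsets, so it is polynomial only when $k$ is a fixed constant. Under the paper's formulation (where $k$ parameterizes the problem and the instance is just $(G,L)$) that is a defensible reading, but it does not cover choices of $k$ that grow with $n$, such as $k=n$ or $k=\lceil n/2\rceil$, which the paper elsewhere treats as legitimate instances of $mvx_k$; for such $k$ membership needs a different certificate (for $k=n$ one can certify with a spanning tree having at least $L-1$ leaves), and for intermediate $k$ the question of NP membership appears genuinely unresolved --- a gap in the paper's own claim, not one introduced by your argument.
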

\begin{proof}
Given a connected graph $G=(V,E)$ with a cut-vertex,
and a positive integer $K$ with $K\leq |V|$.
Since $\gamma_c(G)\leq K$ if and only if $mvx_k(G)=l(T_{max})+1=|V|-\gamma_c(G)+1\geq |V|-K+1$
for $2\leq k\leq |V|$, where $T_{max}$ is a spanning tree of $G$ with the most leaves by
Corollary \ref{cor MVCut}.
Then, given a connected graph $G=(V,E)$ with a cut-vertex,
and a positive integer $L$ with $L\leq |V|$,
to decide whether $mvx_k(G)\geq L$ is NP-complete for each $k$ with $2\leq k\leq |V|$ by Lemma \ref{lem CDS}.
Moreover, Problem $0$ is NP-complete.
\end{proof}

\begin{cor}
Let $G$ be a connected graph on $n$ vertices.
Then, computing $mvx_k(G)$ is NP-hard for each $k$ with $2\leq k\leq n$.
\end{cor}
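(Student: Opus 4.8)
The plan is to derive the corollary from the NP-completeness of Problem $0$ by a routine polynomial-time Turing reduction from the decision problem to the evaluation problem. The underlying idea is that deciding $mvx_k(G)\geq L$ is never harder than computing the exact value $mvx_k(G)$: once the value is in hand, the decision question is answered by a single numerical comparison. So, fixing an integer $k$ with $2\leq k\leq n$, I would establish NP-hardness of computing $mvx_k(G)$ exactly along these lines.

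First I would suppose that there is a polynomial-time algorithm $\mathcal{A}$ that, on input a connected graph $G$ (on at least three vertices), returns the number $mvx_k(G)$. Given any instance $(G,L)$ of Problem $0$ — a connected graph $G$ together with a positive integer $L\leq |V(G)|$ — I would run $\mathcal{A}$ on $G$ to obtain $mvx_k(G)$ in polynomial time, and then output ``yes'' precisely when $mvx_k(G)\geq L$ and ``no'' otherwise. Since the final comparison costs only time linear in the bit-length of the two integers, the whole procedure runs in polynomial time and decides Problem $0$ correctly.

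Because Problem $0$ is NP-complete by the preceding theorem, the existence of such an $\mathcal{A}$ would place an NP-complete problem in $P$, forcing $P=NP$. Hence no polynomial-time algorithm for computing $mvx_k(G)$ can exist unless $P=NP$, which is exactly the assertion that computing $mvx_k(G)$ is NP-hard. As the argument consists of a single call to the evaluation procedure followed by one comparison, and is valid verbatim for every fixed $k$ in the range $2\leq k\leq n$, the conclusion holds uniformly for each such $k$.

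I do not expect a genuine obstacle here; the only point to handle carefully is the formal notion of reduction being invoked. NP-hardness of an evaluation (function) problem is defined through polynomial-time Turing reductions rather than many-one reductions, so I would state explicitly that the reduction treats the evaluation algorithm as an oracle and that exactly one oracle call suffices. Beyond making this precise, the argument is entirely standard and requires no further combinatorial input from the earlier sections.
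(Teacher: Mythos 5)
Your proposal is correct and matches the paper's (implicit) reasoning: the paper states this corollary without proof as an immediate consequence of the NP-completeness of Problem~$0$, and the standard Turing-reduction argument you spell out --- one oracle call to the evaluation algorithm followed by a comparison with $L$ --- is exactly the justification intended. Your remark about NP-hardness of function problems being defined via polynomial-time Turing reductions is a sound clarification, not a deviation.
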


\section{Nordhaus-Gaddum-type results for $mvx_k$}

Suppose that both $G$ and $\overline{G}$ are connected graphs on $n$ vertices.
Now for $n=4$, we have $G=\overline{G}=P_4$.
It is easy to check that $mvx_k(P_4)+mvx_k(\overline{P_4})=6$ for each $k$ with $2\leq k\leq 4$.
For $k=2$, Cai et al. \cite{Cai} proved that
for $n\geq 5$, $n+3\leq mvc(G)+mvc(\overline{G})\leq 2n$,
and the bounds are sharp.
Then, in the following we suppose that $n\geq 5$ and $3\leq k\leq n$.

We first consider the lower bound of $mvx_k(G)+mvx_k(\overline{G})$ for each $k$ with $3\leq k\leq n$.
Now we introduce some useful lemmas.
\begin{lem}\cite{Peters}\label{Spanning}
If both $G$ and $\overline{G}$ are connected graphs on $n$ vertices,
then $\gamma_c(G)+\gamma_c(\overline{G})=n+1$ if and only if $G$ is the cycle $C_5$.
Moreover, if $G$ is not $C_5$, then $\gamma_c(G)+\gamma_c(\overline{G})\leq n$
with equality if and only if $\{G, \overline{G}\} = \{C_n, \overline{C_n}\}$ for $n\geq 6$,
or $\{G, \overline{G}\} = \{P_n, \overline{P_n}\}$ for $n\geq 4$,
or $\{G, \overline{G}\} = \{F_1, \overline{F_1}\}$, where $F_1$ is the graph represented in  Fig.\ref{fig1}.
\end{lem}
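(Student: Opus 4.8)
The plan is to pass from connected domination to maximum-leaf spanning trees and then run a dichotomy according to whether one of the two graphs admits a \emph{dominating edge}. Writing $\ell(G)$ for the maximum number of leaves over all spanning trees of $G$, recall from the discussion preceding this section that $\gamma_c(G)=n-\ell(G)$. Since every spanning tree on $n\ge 2$ vertices has at least two leaves we get $\gamma_c(G)\le n-2$, and since neither $G$ nor $\overline{G}$ can have a universal vertex (a universal vertex of $G$ is isolated in $\overline{G}$), both $\gamma_c(G)\ge 2$ and $\gamma_c(\overline{G})\ge 2$. I would also record the elementary fact that $\ell(G)=2$ precisely when $G$ is a path $P_n$ or a cycle $C_n$, equivalently $\gamma_c(G)=n-2$ if and only if $G\in\{P_n,C_n\}$; this is exactly what will pin down the extremal families.

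First I would dispose of the case in which one of the graphs, say $\overline{G}$, has a dominating edge, i.e. $\gamma_c(\overline{G})=2$. Then $\gamma_c(G)+\gamma_c(\overline{G})=\gamma_c(G)+2\le (n-2)+2=n$, so the bound $n$ holds outright, and equality forces $\gamma_c(G)=n-2$, whence $G\in\{P_n,C_n\}$. It then remains to check the converse direction, namely that $\overline{P_n}$ (for $n\ge 4$) and $\overline{C_n}$ (for $n\ge 6$) do have dominating edges, which is a short direct verification; for instance, in $\overline{P_n}$ the two endpoints of the path are adjacent and their closed neighbourhoods already cover $V$. This recovers the pairs $\{P_n,\overline{P_n}\}$ and $\{C_n,\overline{C_n}\}$ and shows they attain sum exactly $n$. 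Observe that $C_5$ does not fall into this regime, since $\overline{C_5}=C_5$ has $\gamma_c=3$.

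The remaining, and genuinely hard, case is when neither $G$ nor $\overline{G}$ has a dominating edge, i.e. $\gamma_c(G)\ge 3$ and $\gamma_c(\overline{G})\ge 3$. The key step is to translate the two ``no dominating edge'' hypotheses into strong local structure: $\overline{G}$ having no dominating edge means every non-adjacent pair of $G$ has a common $G$-neighbour, while $G$ having no dominating edge means every adjacent pair of $G$ has some vertex nonadjacent to both of its ends. I would show that these two co-domination conditions together are so restrictive that $n$ must be small and that, up to complementation, only $C_5$ and the sporadic graph $F_1$ of Fig.~\ref{fig1} survive: analysing a longest induced path against the common-neighbour condition forces either the self-complementary configuration $C_5$ (where both graphs are $2$-leaf graphs, giving $\gamma_c(C_5)+\gamma_c(\overline{C_5})=3+3=n+1$) or the single exception $F_1$ (giving sum exactly $n$), after confirming by direct computation that $F_1$ and $\overline{F_1}$ both have connected domination number at least $3$ with sum $n$. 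I expect this structural case analysis in the no-dominating-edge regime to be the main obstacle: the global bound $n+1$ and the isolation of $C_5$ as the unique graph attaining it are comparatively clean, whereas ruling out every other graph with $\gamma_c(G),\gamma_c(\overline{G})\ge 3$ except $F_1$, and verifying that none of them reaches sum $n+1$, is where the delicate work lies.
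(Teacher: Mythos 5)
Your proposal cannot be compared against an in-paper argument, because the paper offers none: this lemma is quoted from Laskar and Peters \cite{Peters} as a known result. Judged on its own, the first half of your plan is sound. The translation $\gamma_c(G)=n-\ell(G)$, the bounds $2\le\gamma_c(G)\le n-2$ (using connectivity of both $G$ and $\overline{G}$), the fact that $\ell(G)=2$ if and only if $G\in\{P_n,C_n\}$, and the whole dominating-edge case --- if $\gamma_c(\overline{G})=2$ then the sum is at most $n$, equality forces $G\in\{P_n,C_n\}$, and $\overline{P_n}$ ($n\ge4$), $\overline{C_n}$ ($n\ge6$) indeed have dominating edges --- are all correct and complete.

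The genuine gap is your second case, and it is not an omitted verification but a plan resting on a false premise. You claim that the two conditions ``every non-adjacent pair of $G$ has a common $G$-neighbour'' and ``every edge of $G$ has a vertex non-adjacent to both of its ends'' are so restrictive that $n$ must be small, with only $C_5$ and $F_1$ surviving. This is false: almost every graph satisfies both conditions. For a fixed pair $\{u,v\}$ the probability that it dominates $G(n,\tfrac12)$ is $(3/4)^{n-2}$, so with high probability neither a random graph nor its complement has a dominating edge; concretely, the Paley graph on $13$ vertices is self-complementary and has no dominating edge (for any edge $uv$ one computes $|N[u]\cup N[v]|=10<13$), yet it is none of $C_5$, $F_1$. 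Such graphs do not contradict the lemma --- their connected domination sums are far below $n$ --- but they show that your Case 2 hypotheses carry essentially no structural information; what is restrictive is extremality of the sum, which your dichotomy never invokes in Case 2, and indeed you prove no upper bound at all there (the trivial one is only $2n-4$). A workable replacement is the bound $\ell(G)\ge\Delta(G)$ (grow a spanning tree outward from a maximum star: each added vertex is a new leaf and destroys at most one old leaf), hence $\gamma_c(G)\le n-\Delta(G)$, which gives
\[
\gamma_c(G)+\gamma_c(\overline{G})\ \le\ 2n-\Delta(G)-\Delta(\overline{G})\ =\ n+1-\bigl(\Delta(G)-\delta(G)\bigr).
\]
This yields the global bound $n+1$, shows any graph attaining $n+1$ must be regular and any graph attaining $n$ must have $\Delta-\delta\le1$, and reduces the equality analysis to near-regular graphs with $\ell(G)=\Delta(G)$ and $\ell(\overline{G})=\Delta(\overline{G})$, which is where $C_5$ and $F_1$ genuinely emerge. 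Without an ingredient of this kind, your Case 2 is not a sketch with missing details; it is a strategy that cannot be completed as stated.
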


\begin{figure}
\centering
\includegraphics{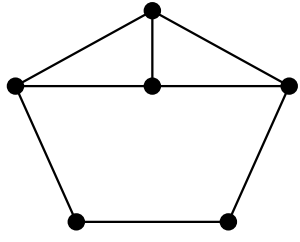}
\caption{The graph $F_1$ with $\gamma_c(F_1)=\gamma_c(\overline{F_1})=3$.}\label{fig1}
\end{figure}

\begin{lem}\cite{Cai}\label{lem MVCn}
Let $C_n$ be a cycle on $n$ vertices. Then,
\begin{align*}
mvc(C_n)=
\begin{cases}
n & n\leq 5\\
3 & n\geq 6.
\end{cases}
\end{align*}
\end{lem}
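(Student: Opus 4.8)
The plan is to treat the two ranges of $n$ separately, proving a matching lower and upper bound in each. The starting observation is that every subtree of a cycle is a sub-arc, so a vertex-monochromatic $S$-tree for a two-element set $S=\{u,w\}$ is simply a vertex-monochromatic path, i.e.\ one of the two arcs joining $u$ and $w$ whose interior is monochromatic. Consequently a coloring is an MVC-coloring precisely when, for every pair $u,w$ at distance at least $3$ (so that both arcs have at least two interior vertices), at least one of the two arcs has a monochromatic interior; pairs at distance at most $2$ impose no restriction. This reformulation is the engine of the whole proof.

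For $n\le 5$ we have $\lfloor n/2\rfloor\le 2$, so every pair is at distance at most $2$ and no restriction arises: the all-different coloring is an MVC-coloring, and since $C_n$ has only $n$ vertices this gives $mvc(C_n)=n$. For $n\ge 6$ the lower bound $mvc(C_n)\ge 3$ is immediate from the material already developed: $mvc(C_n)=mvx_2(C_n)\ge mvx_n(C_n)=n-\gamma_c(C_n)+1$, and $\gamma_c(C_n)=n-2$ (a maximum-leaf spanning tree of a cycle is a Hamiltonian path, with two leaves), so the right-hand side equals $3$.

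The substance is the upper bound $mvc(C_n)\le 3$ for $n\ge 6$. First I would locate one long monochromatic arc. Running the reformulation over all distance-$3$ pairs $(i,i+3)$: if the short arc $\{i+1,i+2\}$ were monochromatic for every $i$, then $c(i+1)=c(i+2)$ for all $i$, forcing a single color. Hence some distance-$3$ pair has a non-monochromatic short arc, and then its complementary arc must be monochromatic; after rotating the labels this reads $c(4)=c(5)=\cdots=c(n-1)=\alpha$ while $c(1)\ne c(2)$. Thus only the four vertices $0,1,2,3$ may carry a color different from $\alpha$, so at most three colors occur unless at least three of these four vertices receive three distinct non-$\alpha$ colors. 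Assuming $n\ge 7$, I would rule this out with two more binding pairs: the MVC-condition for $(2,5)$ forces $c(3)=\alpha$ or $c(0)=c(1)=\alpha$, and the condition for $(n-2,1)$ forces $c(0)=\alpha$ or $c(2)=c(3)=\alpha$. The second alternative of the first pair already colors two of the four vertices with $\alpha$, so $c(3)=\alpha$; then either alternative of the second pair colors a second vertex with $\alpha$. In every case at least two of $0,1,2,3$ get the color $\alpha$, leaving at most three colors overall and contradicting the assumption of four.

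The remaining case $n=6$ I would settle directly, since there the \emph{long} arc has only two interior vertices and the pairs above behave slightly differently. Here the three antipodal pairs $(0,3),(1,4),(2,5)$ have arc-interiors equal to the six edges of $C_6$, each edge occurring exactly once and the three pairs partitioning the edge set; the MVC-condition forces, for each pair, one of its two edges to be monochromatic, hence at least three distinct edges are monochromatic. Since any three edges of $C_6$ are acyclic, identifying the endpoints of these edges leaves at most $6-3=3$ color classes, so $mvc(C_6)\le 3$. The main obstacle is conceptual rather than computational: recognizing that $mvc$ of a cycle is controlled entirely by the two arcs of each far pair, and then reducing the whole configuration to the four vertices lying outside a single forced monochromatic arc. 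Once those two moves are made the finish is bookkeeping, with $n=6$ isolated because the forced arc is then too short for the generic argument.
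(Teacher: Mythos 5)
The paper never proves this lemma: it is imported verbatim from \cite{Cai} (the arXiv paper of Cai, Li and Wu on monochromatic vertex-connectivity), so there is no in-paper argument to compare yours against. Your proof is correct and self-contained, which is more than the paper offers. The reduction to arcs is sound (in a cycle the only $u$--$w$ paths are the two arcs, and arcs with at most one interior vertex impose no constraint), and it immediately settles $n\le 5$ since then the diameter is at most $2$. The lower bound $mvc(C_n)\ge 3$ via $mvc(C_n)\ge mvx_n(C_n)=n-\gamma_c(C_n)+1$ and $\gamma_c(C_n)=n-2$ correctly reuses machinery from Section 3 (one can also just color the interior of a Hamiltonian path with one color and its two ends with two new colors). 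For the upper bound with $n\ge 7$: either every short interior $\{i+1,i+2\}$ is monochromatic, forcing a constant coloring, or after rotation $c(4)=\cdots=c(n-1)=\alpha$ with $c(1)\neq c(2)$; your two auxiliary pairs then give the disjunctions ($c(3)=\alpha$ or $c(0)=c(1)=\alpha$) and ($c(0)=\alpha$ or $c(2)=c(3)=\alpha$), and in each of the three resulting cases at least two of $0,1,2,3$ receive $\alpha$, leaving at most three colors. Your isolation of $n=6$ is genuinely necessary (the long arc's interior is then a single edge, so the generic argument degenerates), and the antipodal-pair count is right: the six arc-interiors are the six edges, one edge from each opposite pair must be monochromatic, any three edges of $C_6$ are acyclic, so contracting them leaves $6-3=3$ color classes. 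In short, where the paper supplies only a citation, you have supplied a complete and correct proof.
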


Recall that
a vertex-monochromatic $S$-tree is a vertex-monochromatic tree containing $S$.
For convenience, if the vertex-monochromatic $S$-tree is a star (with the center $v$),
we use $S$-star ($S_v$-star) to denote this vertex-monochromatic $S$-tree.
For two subsets $U,W\subseteq V(G)$,
we use $U\sim W$ to denote that any vertex in $U$ is adjacent with any vertex in $W$.
If $U=\{x\}$, we use $x\sim W$ instead of $\{x\}\sim W$.

From Lemma \ref{Spanning},
we have $mvx_k(C_n)+mvx_k(\overline{C_n})\geq mvx_n(C_n)+mvx_n(\overline{C_n})=
2n-(\gamma_c(C_n)+\gamma_c(\overline{C_n}))+2\geq n+2$ for $n\geq 6$ and $k$ with $3\leq k\leq n$.
It is easy to check that $mvx_k(C_n)=3$ for $n\geq 6$ and $k$ with $3\leq k\leq n$ by Lemma \ref{lem MVCn}.
Then, we have $mvx_k(\overline{C_n})\geq n-1$ for $n\geq 6$ and $k$ with $3\leq k\leq n$.
Now we introduce the following lemma.

\begin{lem}\label{lem Cn}
For $n\geq 6$, if $n$ is odd, then $mvx_k(\overline{C_n})=n$ for $k$ with $3\leq k\leq \frac{n-1}{2}$,
and $mvx_k(\overline{C_n})=n-1$ for $k$ with $\frac{n+1}{2}\leq k\leq n$;
if $n=4t$, then $mvx_k(\overline{C_n})=n$ for $k$ with $3\leq k\leq \frac{n}{2}-1$,
and $mvx_k(\overline{C_n})=n-1$ for $k$ with $\frac{n}{2}\leq k\leq n$;
if $n=4t+2$, then $mvx_k(\overline{C_n})=n$ for $k$ with $3\leq k\leq \frac{n}{2}$,
and $mvx_k(\overline{C_n})=n-1$ for $k$ with $\frac{n}{2}+1\leq k\leq n$.
\end{lem}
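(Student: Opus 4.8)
The plan is to reduce the computation to a purely combinatorial question about subsets of $\mathbb{Z}_n$ and then settle it by computing one independence number. Since $\overline{C_n}$ has $n$ vertices, every vertex-coloring uses at most $n$ colors, so $mvx_k(\overline{C_n})\le n$; moreover a coloring attains $n$ colors exactly when it assigns all $n$ vertices distinct colors, i.e.\ it is the \emph{rainbow} coloring. Hence $mvx_k(\overline{C_n})=n$ if and only if the rainbow coloring is an $MVX_k$-coloring. Under the rainbow coloring a vertex-monochromatic tree can contain at most one internal vertex (two internal vertices would carry distinct colors), so it must be a star; therefore, for a $k$-set $S$, a vertex-monochromatic $S$-tree exists if and only if there is an $S_v$-star for some center $v$, which happens precisely when $S\subseteq N[v]$, the closed neighborhood $N[v]=\{v\}\cup N(v)$. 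Labelling $V(\overline{C_n})$ by $\mathbb{Z}_n$ so that $C_n$ has edges $\{i,i+1\}$, we have $N[i]=\mathbb{Z}_n\setminus\{i-1,i+1\}$ in $\overline{C_n}$. Thus the rainbow coloring is an $MVX_k$-coloring if and only if for every $k$-subset $S$ there is a vertex $v$ with $v-1\notin S$ and $v+1\notin S$.

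First I would pass to complements. Writing $\overline{S}=\mathbb{Z}_n\setminus S$ with $|\overline{S}|=n-k$, the condition ``there exists $v$ with $v\pm 1\notin S$'' says exactly that $\overline{S}$ contains two elements at cyclic distance $2$ (namely $v-1$ and $v+1$). Consequently the rainbow coloring works if and only if every $(n-k)$-subset of $\mathbb{Z}_n$ contains a pair differing by $2\pmod n$, and this fails exactly when some $(n-k)$-subset is independent in the graph $\Gamma$ on $\mathbb{Z}_n$ with edge set $\{\,\{i,i+2\}: i\in\mathbb{Z}_n\,\}$. Letting $\alpha$ be the independence number of $\Gamma$, the rainbow coloring works iff $n-k\ge\alpha+1$, i.e.\ $k\le n-1-\alpha$. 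The key structural fact is that $\Gamma$ is a single $n$-cycle when $n$ is odd (since $2$ generates $\mathbb{Z}_n$) and a disjoint union of two $(n/2)$-cycles when $n$ is even (the even and odd residues). Using $\alpha(C_m)=\lfloor m/2\rfloor$ I get $\alpha=\tfrac{n-1}{2}$ for $n$ odd, $\alpha=\tfrac{n}{2}$ for $n\equiv 0\pmod 4$, and $\alpha=\tfrac{n}{2}-1$ for $n\equiv 2\pmod 4$. Substituting these into $k\le n-1-\alpha$ yields exactly the three stated thresholds $\tfrac{n-1}{2}$, $\tfrac{n}{2}-1$, and $\tfrac{n}{2}$.

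It then remains to fill in the value $n-1$ above each threshold. For $k$ exceeding the threshold the rainbow coloring is not an $MVX_k$-coloring, so $mvx_k(\overline{C_n})\le n-1$; combined with the bound $mvx_k(\overline{C_n})\ge n-1$ established just before the lemma, this gives $mvx_k(\overline{C_n})=n-1$ on that range. Since the two ranges in each parity case partition $\{3,4,\ldots,n\}$, the lemma follows. I expect the main obstacle to be the clean combinatorial reformulation together with the careful bookkeeping of the three parity cases: one must identify $\Gamma$ correctly, compute its independence number in each case, and track the single off-by-one in $k\le n-1-\alpha$ so that the thresholds line up exactly with the claimed values.
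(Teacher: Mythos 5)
Your proposal is correct, and it reaches the result by a genuinely different route than the paper. The paper shares your starting point (only the rainbow coloring uses $n$ colors, and under it a vertex-monochromatic $S$-tree must be a star, i.e.\ $S\subseteq N[v]$ for some $v$), but from there it works inside $C_n$ rather than passing to complements: for the $n-1$ range it exhibits an explicit bad set ($S=\{v_i: i\equiv 0 \text{ or } 1\pmod 4\}$ when $n$ is odd), and for the $n$ range it argues by contradiction, decomposing $C_n[S]$ into disjoint paths and proving two claims --- every path of $C_n[S]$ has at least two vertices, and consecutive paths are separated by at most two vertices of $C_n$ --- which feed a counting contradiction $n\le k+2p\le k+2\lfloor k/2\rfloor<n$; the even cases are declared ``similar'' and omitted. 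Your complementation step collapses this structure: the paper's Claim~1 and Claim~2 are precisely the two cases (midpoint in $S$ or not) of your single condition that $\overline{S}$ contains no pair at cyclic distance $2$, i.e.\ that $\overline{S}$ is independent in the circulant graph $\Gamma$ with edges $\{i,i+2\}$. Computing $\alpha(\Gamma)$ once then settles both directions simultaneously --- bad sets of every size up to $\alpha$ exist (take subsets of a maximum independent set), and none larger exist --- so all three parity cases, including those the paper omits, fall out of the uniform formula $k\le n-1-\alpha(\Gamma)$, with the trichotomy emerging from whether $\Gamma$ is one $n$-cycle or two $(n/2)$-cycles of even or odd length. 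What the paper's version buys in exchange is self-containedness: concrete extremal sets and an elementary count with no appeal to circulant structure. Both proofs use the bound $mvx_k(\overline{C_n})\ge n-1$ established just before the lemma in exactly the same way, so your reliance on it is legitimate.
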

\begin{proof}
Suppose that $V(C_n)=\{v_0,v_1,\ldots,v_{n-1}\}$,
and the clockwise permutation sequence is $v_0,v_1,\ldots,v_{n-1},v_0$ in $C_n$.
Let $f$ be an extremal $MVX_k$-coloring of $\overline{C_n}$ for each $k$ with $3\leq k\leq n$.
Suppose first that $n$ is odd.
Let $S=\{v_i:i\equiv 0 ~or~ 1\pmod {4}$\}. Then, $|S|=\frac{n+1}{2}$.
It is easy to check that there exists no $S$-star in $\overline{C_n}$.
Then, we have $mvx_k(\overline{C_n})< n$ for $k$ with $\frac{n+1}{2}\leq k\leq n$.
Hence, $mvx_k(\overline{C_n})=n-1$ for $k$ with $\frac{n+1}{2}\leq k\leq n$.
For $k$ with $3\leq k\leq \frac{n-1}{2}$, we will show that $mvx_k(\overline{C_n})=n$.
In other words, for any set $S$ of $k$ vertices of $\overline{C_n}$, there
exists an $S$-star in $\overline{C_n}$.
We first show that $mvx_k(\overline{C_n})$ for $k=\frac{n-1}{2}$.
By contradiction, assume that $mvx_k(\overline{C_n})<n$ for $k=\frac{n-1}{2}$.
Suppose that $S$ is a set of $k$ vertices such that there
exists no $S$-star in $\overline{C_n}$.
Note that the vertex-induced subgraph $C_n[S]$ consists of some disjoint paths
$\{P_{v_{i_1}v_{j_1}},P_{v_{i_2}v_{j_2}},\ldots,P_{v_{i_p}v_{j_p}}\}$
where $\{v_{i_q},v_{j_q}\}$ denote the ends of $P_{v_{i_q}v_{j_q}}$
such that the vertex-sequence $v_{i_q}$ to $v_{j_q}$ along $P_{v_{i_q}v_{j_q}}$
is in clockwise direction in $C_n$ for each $q$ with $1\leq q\leq p$.

{\bf Claim $1$:} Each $P_{v_{i_q}v_{j_q}}$ contains at least $2$ vertices for each $q$ with $1\leq q\leq p$.

{\bf Proof of Claim $1$:} By contradiction, assume that $P_{v_{i_q}v_{j_q}}=v$ for some $v\in V(C_n)$ now.
Since $\{P_{v_{i_1}v_{j_1}},P_{v_{i_2}v_{j_2}},\ldots,P_{v_{i_p}v_{j_p}}\}$ are disjoint paths in $C_n$,
then $v\sim S\setminus\{v\}$ in $\overline{C_n}$.
Hence, there exists an $S_v$-star in $\overline{C_n}$, a contradiction.

Consider $\{P_{v_{i_1}v_{j_1}},P_{v_{i_2}v_{j_2}},\ldots,P_{v_{i_p}v_{j_p}}\}$ in $C_n$.
Suppose w.l.o.g that the clockwise permutation sequence of these paths is
$P_{v_{i_1}v_{j_1}},P_{v_{i_2}v_{j_2}},\ldots,P_{v_{i_p}v_{j_p}},P_{v_{i_{p+1}}v_{j_{p+1}}}=P_{v_{i_1}v_{j_1}}$ in $C_n$.
For any two successive paths $P_{v_{i_q}v_{j_q}}$ and $P_{v_{i_{q+1}}v_{j_{q+1}}}$ where $1\leq q\leq p$,
we have the following claim.

{\bf Claim $2$:} There are at most $2$ vertices between $\{v_{j_q},v_{i_{q+1}}\}$
in clockwise direction in $C_n$ for each $q$ with $1\leq q\leq p$.

{\bf Proof of Claim $2$:} By contradiction, assume that there are at least $3$ vertices $\{v_{r-1},v_{r},v_{r+1}\}$, where the subscript is subject to modulo $n$,
between $\{v_{j_q},v_{i_{q+1}}\}$ in clockwise direction in $C_n$.
Now, we have $v_r\sim S$ in $\overline{C_n}$.
Then, there exists an $S_{v_r}$-star in $\overline{C_n}$, a contradiction.

If $n=4t+1$, then $k=2t$. Now, we have $p\leq \lfloor\frac{k}{2}\rfloor=t$ by Claim $1$.
Then, $|V(C_n)|\leq k+2p\leq n-1<n$ by Claim $2$, a contradiction.
If $n=4t+3$, then $k=2t+1$. Now, we have $p\leq \lfloor\frac{k}{2}\rfloor=t$ by Claim $1$.
Then, $|V(C_n)|\leq k+2p\leq n-2<n$ by Claim $2$, a contradiction.
Hence, if $n$ is odd, then $n=mvx_{\frac{n-1}{2}}(\overline{C_n})\leq \ldots mvx_4(\overline{C_n})\leq mvx_3(\overline{C_n})\leq n$.
The proof for the case $n=4t$ or $n=4t+2$ is similar. We omit their details.
\end{proof}

\begin{thm}
Suppose that both $G$ and $\overline{G}$ are connected graphs on $n$ vertices.
For $n=5$, $mvx_k(G)+mvx_k(\overline{G})\geq 6$ for $k$ with $3\leq k\leq 5$.
For $n=6$, $mvx_k(G)+mvx_k(\overline{G})\geq 8$ for $k$ with $3\leq k\leq 6$.
For $n\geq 7$,
if $n$ is odd, then $mvx_k(G)+mvx_k(\overline{G})\geq n+3$ for $k$ with $3\leq k\leq \frac{n-1}{2}$,
and $mvx_k(G)+mvx_k(\overline{G})\geq n+2$ for $k$ with $\frac{n+1}{2}\leq k\leq n$;
if $n=4t$, then $mvx_k(G)+mvx_k(\overline{G})\geq n+3$ for $k$ with $3\leq k\leq \frac{n}{2}-1$,
and $mvx_k(G)+mvx_k(\overline{G})\geq n+2$ for $k$ with $\frac{n}{2}\leq k\leq n$;
if $n=4t+2$, then $mvx_k(G)+mvx_k(\overline{G})\geq n+3$ for $k$ with $3\leq k\leq \frac{n}{2}$,
and $mvx_k(G)+mvx_k(\overline{G})\geq n+2$ for $k$ with $\frac{n}{2}+1\leq k\leq n$.
Moreover, all the above bounds are sharp.
\end{thm}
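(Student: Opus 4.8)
The plan is to establish the lower bounds by exhibiting, for each $k$, a good vertex-coloring of $G$ together with a good coloring of $\overline{G}$ whose color counts sum to the claimed value, and then separately to exhibit extremal pairs $\{G,\overline{G}\}$ achieving equality to prove sharpness. The natural tool is the relation $mvx_k(G)\geq mvx_n(G)=n-\gamma_c(G)+1$ from the discussion preceding Lemma~\ref{lem MVCut}, which lets me convert the problem into a statement about $\gamma_c(G)+\gamma_c(\overline{G})$. Indeed, for any $k$ with $3\leq k\leq n$ we have
\begin{align*}
mvx_k(G)+mvx_k(\overline{G})\;\geq\; mvx_n(G)+mvx_n(\overline{G})
= 2n-\bigl(\gamma_c(G)+\gamma_c(\overline{G})\bigr)+2,
\end{align*}
so the generic lower bound is driven by an \emph{upper} bound on $\gamma_c(G)+\gamma_c(\overline{G})$, which is exactly what Lemma~\ref{Spanning} supplies.

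First I would dispose of the small cases $n=5$ and $n=6$ and the ``large-$k$'' ranges directly from Lemma~\ref{Spanning}. Since $\gamma_c(G)+\gamma_c(\overline{G})\leq n$ whenever $G\neq C_5$ (and $=n+1$ only for $C_5$), the displayed inequality already gives $mvx_k(G)+mvx_k(\overline{G})\geq n+2$ for every $G\neq C_5$ and every $k$ in $3\leq k\leq n$; the $C_5$ case is checked by hand using Lemma~\ref{lem MVCn} (note $mvc(C_5)=5$). This immediately yields the claimed $n+2$ bounds on the ``upper halves'' of the $k$-ranges, as well as the $n=5$ bound of $6$ and, after a short verification, the $n=6$ bound of $8$. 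The point of the trichotomy on $n\bmod 4$ is only that it governs where the transition from $n+3$ to $n+2$ occurs.

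The substantive part is the strengthening to $n+3$ on the ``lower halves'' of the $k$-ranges, and here I would use Lemma~\ref{lem Cn} as the engine. The generic bound gives $n+2$; to gain one more color I argue that for small $k$ at least one of $G,\overline{G}$ admits a coloring beating its spanning-tree value. The cleanest route is to observe that the worst case for the generic bound is when $\gamma_c(G)+\gamma_c(\overline{G})=n$, which by Lemma~\ref{Spanning} forces $\{G,\overline{G}\}$ to be one of $\{C_n,\overline{C_n}\}$, $\{P_n,\overline{P_n}\}$, or $\{F_1,\overline{F_1}\}$; for these specific pairs I compute the $k$-dependence directly. In particular, for $\{C_n,\overline{C_n}\}$ I invoke Lemma~\ref{lem Cn}, which shows $mvx_k(\overline{C_n})=n$ (rather than $n-1$) precisely on the lower $k$-ranges named in the theorem, while $mvx_k(C_n)=3$ for $n\geq 6$; adding these gives $n+3$ exactly on those ranges and $n+2$ afterward, matching the stated thresholds. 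For all other graphs, $\gamma_c(G)+\gamma_c(\overline{G})\leq n-1$, so the generic bound already delivers $n+3$ with no $k$-restriction.

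The main obstacle I anticipate is the $k$-dependence: the generic spanning-tree bound is insensitive to $k$, so the refined $n+3$ bounds cannot come from it alone and must be extracted from the star-existence analysis of Lemma~\ref{lem Cn}, whose thresholds ($\frac{n-1}{2}$, $\frac{n}{2}-1$, $\frac{n}{2}$ according to $n\bmod 4$) are exactly the thresholds appearing in the theorem. The delicate point is to confirm that the extremal pairs from Lemma~\ref{Spanning} are genuinely the only obstructions to $n+3$, i.e. that every graph with $\gamma_c(G)+\gamma_c(\overline{G})=n$ other than the cycle pair still achieves $n+3$ for all relevant $k$, and that the cycle pair is where the threshold behavior lives; this requires checking $mvx_k$ on $\{P_n,\overline{P_n}\}$ and $\{F_1,\overline{F_1}\}$. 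Finally, for sharpness I would exhibit $\{C_n,\overline{C_n}\}$ as the tight example throughout, since Lemma~\ref{lem Cn} combined with $mvx_k(C_n)=3$ realizes equality in both the $n+3$ and $n+2$ regimes, and I would record the explicit small-case colorings witnessing equality for $n=5,6$.
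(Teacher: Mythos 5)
Your proposal is correct and takes essentially the same route as the paper's own proof: the generic bound $mvx_k(G)+mvx_k(\overline{G})\geq 2n-\bigl(\gamma_c(G)+\gamma_c(\overline{G})\bigr)+2$ combined with Lemma~\ref{Spanning}, then refinement to $n+3$ on the lower $k$-ranges by noting that the only pairs with $\gamma_c(G)+\gamma_c(\overline{G})=n$ are $\{C_n,\overline{C_n}\}$, $\{P_n,\overline{P_n}\}$, $\{F_1,\overline{F_1}\}$ and handling these via Lemma~\ref{lem Cn} (cycle pair), comparison/monotonicity (path pair), and direct computation ($F_1$, $n=6$), with the cycle pair also supplying sharpness.
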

\begin{proof}
For $n=5$, if $G=\overline{G}=C_5$,
then it is easy to check that $2mvx_k(C_5)=6$ for $k$ with $3\leq k\leq 5$;
if $G\neq C_5$, then $mvx_k(G)+mvx_k(\overline{G})\geq 7$ for $k$ with $3\leq k\leq 5$ by Lemma \ref{Spanning}.
For $n\geq 6$, we have $mvx_k(G)+mvx_k(\overline{G})\geq mvx_n(G)+mvx_n(\overline{G})=n+2$ for $k$ with $3\leq k\leq n$
with equality if and only if $\{G, \overline{G}\} = \{C_n, \overline{C_n}\}$ for $n\geq 6$,
or $\{G, \overline{G}\} = \{P_n, \overline{P_n}\}$ for $n\geq 6$,
or $\{G, \overline{G}\} = \{F_1, \overline{F_1}\}$, where $F_1$ is the graph represented in  Fig.\ref{fig1} by Lemma \ref{Spanning}.
For $n\geq 6$,
it is easy to check that $mvx_k(C_n)=mvx_k(P_n)=3$ for $k$ with $3\leq k\leq n$ by Lemma \ref{lem MVCn}.
Then, we have $mvx_k(P_n)+mvx_k(\overline{P_n})\geq mvx_k(C_n)+mvx_k(\overline{C_n})$ for $k$ with $3\leq k\leq n$.
Furthermore, for $n=6$, it is easy to check that $mvx_k(F_1)+mvx_k(\overline{F_1})=8$ for $k$ with $3\leq k\leq 6$. Thus, the theorem follows for $n\geq 6$ by Lemma \ref{lem Cn}.
\end{proof}

Now we consider the upper bound of $mvx_k(G)+mvx_k(\overline{G})$ for each $k$ with $\lceil\frac{n}{2}\rceil\leq k\leq n$.
For convenience, we use $d_G(v)$ and $N_G(v)$ to denote the degree and the neighborhood of a vertex $v$ in $G$.
For any two vertices $u,v \subseteq V(G)$, we use $d_G(u,v)$ to denote the distance between $u$ and $v$ in $G$.
Note that a straightforward upper bound of $mvx_k(G)$ is that $mvx_k(G)\leq mvc(G)\leq n-diam(G)+2$
where $diam(G)$ is the diameter of $G$ for each $k$ with $3\leq k\leq n$.
Next we introduce some useful lemmas.

\begin{lem}\label{Kmn}
Let $K_{n_1,n_2}$ be a complete bipartite graph such that $n=n_1+n_2$, and $n_1,n_2\geq 2$.
Let $G=K_{n_1,n_2}-e$, where $e$ is an edge of $K_{n_1,n_2}$.
Then, $mvx_k(G)+mvx_k(\overline{G})=2n-2$ for $3\leq k\leq n$.
\end{lem}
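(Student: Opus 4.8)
The plan is to compute $mvx_k(G)$ and $mvx_k(\overline{G})$ separately, using the general upper bound $mvx_k(G)\le mvc(G)\le n-\mathrm{diam}(G)+2$ recalled just before the statement, and a matching explicit coloring for the lower bound. The key structural facts are that $G=K_{n_1,n_2}-e$ has diameter $3$ (the two endpoints of the removed edge $e$, which lie in opposite parts, are at distance $3$, while all other pairs are at distance $1$ or $2$), whereas its complement $\overline{G}$ has diameter $2$. This immediately gives the two upper bounds $mvx_k(G)\le n-3+2=n-1$ and $mvx_k(\overline{G})\le n-2+2=n$, whose sum is $2n-1$. To reach the claimed value $2n-2$ I must improve one of these by one, and the natural candidate is $mvx_k(G)$: I would argue that no $MVX_k$-coloring of $G$ can actually attain $n-1$ colors, i.e. $mvx_k(G)\le n-2$.

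First I would nail down the geometry of $\overline{G}=\overline{K_{n_1,n_2}-e}$. Its complement consists of two cliques (on the two parts, of sizes $n_1$ and $n_2$) together with the single edge $e$ joining them. From here I would exhibit a coloring with $n$ colors witnessing $mvx_k(\overline{G})=n$ for all $k$ in range: because $\overline{G}$ contains a cut-vertex-like star structure through $e$, or more directly because one can realize a vertex-monochromatic star (or small tree) for every $k$-set, I expect the rainbow (all-distinct) coloring to work, since every $k$-set admits a vertex-monochromatic $S$-tree with no internal vertices of a forbidden color when a common neighbor exists. I would verify this by checking that any $k$ vertices have a vertex in a dominating position, invoking the connected-dominating-set characterization $mvx_n(\overline{G})=n-\gamma_c(\overline{G})+1$ from Section~3 together with $\gamma_c(\overline{G})=1$ (the endpoint of $e$ in the larger clique dominates everything), which forces $mvx_n(\overline{G})=n$ and hence $mvx_k(\overline{G})=n$ throughout by monotonicity.

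Next I would handle $G=K_{n_1,n_2}-e$. Here $\gamma_c(G)=2$: a single vertex cannot dominate both parts once $e$ is removed, but any two adjacent vertices do, so $mvx_n(G)=n-2+1=n-1$. This does \emph{not} yet contradict $n-1$, so the delicate point is that while $mvx_n(G)=n-1$ is achievable for the all-vertices case via a spanning double-star, one must check whether a single extremal coloring simultaneously handles \emph{every} $k$-set. I would show directly that any vertex-coloring using $n-1$ colors (so exactly two vertices share a color, all others distinct) fails some $k$-set $S$ when $\lceil n/2\rceil\le k\le n$: choosing $S$ to straddle both parts and to include the endpoints of $e$ forces any $S$-tree to have at least two internal vertices, and with $k$ large these internal vertices cannot all carry the one repeated color, so no vertex-monochromatic $S$-tree exists. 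This yields $mvx_k(G)\le n-2$, and a matching explicit coloring (color the two endpoints of $e$ alike, all else distinct) gives the lower bound, so $mvx_k(G)=n-2$. Combining, $mvx_k(G)+mvx_k(\overline{G})=(n-2)+n=2n-2$.

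The main obstacle I anticipate is the last reduction: turning the diameter bound $n-1$ for $G$ into the sharper $n-2$ by ruling out \emph{every} $(n-1)$-color coloring against \emph{all} $k$-sets in the stated range, rather than for a single convenient $S$. This requires a careful case analysis of where the two vertices sharing the repeated color sit relative to the parts and to $e$, and an explicit choice of obstructing $S$ for each case; the range restriction $\lceil n/2\rceil\le k$ is presumably exactly what guarantees such an $S$ exists. I would also need to confirm the lower-bound colorings really are $MVX_k$-colorings for all $k$ in range, which is routine once the star/double-star trees are described, but the pigeonhole argument forcing two internal vertices of distinct colors is where the real work lies.
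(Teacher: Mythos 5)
There is a genuine gap, and it starts with the structure of $\overline{G}$. Write $G=K_{n_1,n_2}-e$ with parts $A,B$ and $e=ab$, $a\in A$, $b\in B$. Then $\overline{G}$ consists of two cliques (on $A$ and on $B$) joined by the \emph{single} edge $ab$. Consequently $diam(\overline{G})=3$ (a vertex of $A\setminus\{a\}$ and a vertex of $B\setminus\{b\}$ are at distance $3$, via $a$ and $b$), not $2$ as you claim; and $\gamma_c(\overline{G})=2$, not $1$: the endpoint of $e$ in the larger clique dominates only its own clique and the other endpoint of $e$, not the rest of the other clique. So your conclusion $mvx_k(\overline{G})=n$ is false; the correct value is $n-1$. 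The other half of your split is also wrong: $mvx_k(G)=n-1$, not $n-2$. Your proposed obstruction argument --- that in any coloring with exactly one repeated color, a $k$-set straddling both parts and containing $a,b$ forces two internal vertices that ``cannot all carry the one repeated color'' --- is defeated by the double-star coloring: pick $a'\in A\setminus\{a\}$ and $b'\in B\setminus\{b\}$; then $a'$ is adjacent to all of $B$ and $b'$ to all of $A$ in $G$, so the double star with centers $a',b'$ is a \emph{spanning} tree whose only internal vertices are $a',b'$. Giving $a',b'$ one common color and every other vertex a distinct color uses $n-1$ colors and is an $MVX_k$-coloring for every $k$ simultaneously, since that single spanning tree contains every set $S$. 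So no case analysis can push $mvx_k(G)$ below $n-1$; your two errors happen to cancel in the sum $(n-2)+n=2n-2$, but neither summand is right.

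The paper's actual proof is short and symmetric: both $G$ and $\overline{G}$ have diameter $3$, so $mvc(G)+mvc(\overline{G})\leq 2(n-3+2)=2n-2$; both contain a spanning double star (the one above for $G$, and the one with centers $a,b$ for $\overline{G}$), so $mvx_n(G)+mvx_n(\overline{G})\geq 2(n-2+1)=2n-2$; then the chain $mvx_n\leq\ldots\leq mvx_3\leq mvc$ squeezes every $mvx_k(G)+mvx_k(\overline{G})$ to exactly $2n-2$. To repair your write-up you would need to correct the two structural claims about $\overline{G}$, discard the $mvx_k(G)\leq n-2$ step entirely, and replace it with the diameter bound applied to \emph{both} graphs together with the two double-star colorings.
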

\begin{proof}
It is easy to check that $diam(G)=3$, and $diam(\overline{G})=3$.
Then, we have $mvc(G)+mvc(\overline{G})\leq 2n-2$.
It is also easy to check that both $G$ and $\overline{G}$ contain a double star as a spanning tree.
Then, we have $mvx_n(G)+mvx_n(\overline{G})\geq 2n-2$.
Hence, the lemma follows by the fact that $mvx_n(G)\leq\ldots\leq mvx_3(G)\leq mvc(G)$.
\end{proof}

\begin{lem}\label{Upp}
If $k=\lceil\frac{n}{2}\rceil$,
then $mvx_k(G)+mvx_k(\overline{G})\leq 2n-2$ for $n\geq 5$.
\end{lem}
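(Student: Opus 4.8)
The plan is to reduce the claim to a one-sided statement and then exploit a clean description of when $mvx_k$ attains its largest possible value $n$. Since every vertex-coloring uses at most $n$ colors, $mvx_k(G),mvx_k(\overline G)\le n$; if both are at most $n-1$ the bound $2n-2$ is immediate, so I may assume (relabelling if necessary) that $mvx_k(G)=n$ and prove $mvx_k(\overline G)\le n-2$. The engine is the observation that when all $n$ colors are distinct, every vertex-monochromatic tree has at most one internal vertex and is therefore a star; hence, for $k\ge 3$, $mvx_k(H)=n$ if and only if every $k$-subset of $V(H)$ lies in some closed neighborhood $N_H[v]$. As $N_H[v]$ is the complement of the $\overline H$-neighborhood of $v$, this is equivalent to saying that no $k$-set meets the neighborhood of every vertex of $\overline H$, i.e. $\gamma_t(\overline H)>k$, where $\gamma_t$ denotes the total domination number.

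First I would read off the structure forced by $mvx_k(G)=n$. Because $k=\lceil n/2\rceil$ gives $2k\ge n$, I can cover $V$ by two $k$-sets $S\cup S'=V$; star-covering each produces centers $v,v'$ with $N_G[v]\cup N_G[v']=V$, so $\{v,v'\}$ dominates $G$. Connectivity of $\overline G$ forbids a universal vertex, so $v\ne v'$; and if $vv'\notin E(G)$, a $k$-set containing both is star-covered by some $u$ with $v,v'\in N_G[u]$, which (as $vv'\notin E(G)$) is a common neighbour of $v,v'$, whence $\{v,v',u\}$ is a total dominating set. Either way $\gamma_t(G)\le 3$.

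Next I would refute $mvx_k(\overline G)\ge n-1$. A coloring of $\overline G$ with at least $n-1$ colors is either all-distinct or has a unique repeated pair $\{a,b\}$, and in the latter case the only vertex-monochromatic trees are stars and double-stars centred at $\{a,b\}$ (the latter available only if $ab\in E(\overline G)$). In the language of $G$, a $k$-set $S$ has no star in $\overline G$ iff $S$ is a total dominating set of $G$, and it escapes the double-star iff $S$ meets $W:=N_G(a)\cap N_G(b)$. The all-distinct coloring would force $\gamma_t(G)>k\ge 3$, contradicting $\gamma_t(G)\le 3$. For a repeated pair with $ab\notin E(G)$ one has $W\ne\varnothing$ (otherwise $\{a,b\}$ is a size-$2$ total dominating set of $\overline G$, contradicting $\gamma_t(\overline G)>k$); then, starting from a minimum total dominating set of $G$, adjoining some $w\in W$, and padding to size exactly $k$ yields a $k$-set that is a total dominating set of $G$ meeting $W$, hence admits no vertex-monochromatic tree under this coloring. (The case $ab\in E(G)$ is easier, since only stars are available.) As every coloring with at least $n-1$ colors fails, $mvx_k(\overline G)\le n-2$.

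The crux is the arithmetic of the padding step, which requires $\gamma_t(G)+1\le k$, i.e. $k\ge 4$; this is exactly where $k=\lceil n/2\rceil$ is used, and it holds for $n\ge 7$. I expect the boundary values $n\in\{5,6\}$ to be the delicate point and would treat them directly: for $n=5$, $mvx_3=5$ is impossible because it would need $\gamma_t$ of the complement to exceed $2n/3$, so both terms are at most $4$ and the sum is at most $8$; for $n=6$, $mvx_3(G)=6$ forces $\gamma_t(\overline G)=4=2n/3$, which pins $\overline G$ to the short list of extremal order-$6$ graphs, checked by hand. Making the two total-domination translations fully rigorous and clearing these small graphs is where I anticipate the real work; sharpness of the bound $2n-2$ is already exhibited by the graphs $K_{n_1,n_2}-e$ of Lemma \ref{Kmn}.
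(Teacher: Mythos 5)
You take a genuinely different route from the paper, and your argument is sound. The paper's proof is a self-contained case analysis on $\Delta(G)$: assuming $mvx_k(G)=n$, it builds explicit bad $k$-sets from a maximum-degree vertex and its non-neighbours, gets $mvx_k(\overline{G})\leq n-1$ from $mvx_k(\overline{G})\leq mvc(\overline{G})\leq n-diam(\overline{G})+2$, and excludes $mvx_k(\overline{G})=n-1$ by showing that a length-$3$ vertex-monochromatic path would force the unique repeated colour pair into an impossible position (this is your star/double-star dichotomy in path form); its Case 2 ($\Delta(G)\leq n-k$) shows $mvx_k(G)=n$ is impossible outright except for one sporadic $n=6$ configuration checked by hand. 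Your dictionary $mvx_k(H)=n \Leftrightarrow \gamma_t(\overline{H})>k$ (valid since $k\geq 3$), the deduction $\gamma_t(G)\leq 3$ from $2k\geq n$, and the uniform refutation of every colouring of $\overline{G}$ with at least $n-1$ colours --- only stars, or stars plus double-stars centred at the repeated pair $\{a,b\}$; non-emptiness of $W=N_G(a)\cap N_G(b)$ via $\gamma_t(\overline{G})>k$; padding a minimum total dominating set of $G$ by a vertex of $W$ up to size exactly $k$ --- are all correct, and the arithmetic $\gamma_t(G)+1\leq k$ indeed holds exactly when $n\geq 7$. What you gain is a cleaner, degree-free argument that explains the bound conceptually through total domination; what you lose is self-containedness at the boundary: for $n\in\{5,6\}$ you must import the Cockayne--Dawes--Hedetniemi bound $\gamma_t\leq 2n/3$ for connected graphs and, for $n=6$, the characterization of equality (only $C_6$ and $P_6$ in order $6$), plus a hand-check of $mvx_3$ on those two graphs, whereas the paper's ad hoc Case 2 needs no outside results. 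Those imports are standard and your deductions from them are right, so once the $n=5,6$ details are written out in full this stands as a complete alternative proof.
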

\begin{proof}
Let $V(G)=\{v_1,v_2,\ldots,v_n\}$.
Since $\overline{G}$ is connected, then $\Delta(G)\leq n-2$.
Suppose first that $mvx_k=n$, and $f$ is an extremal $MVX_k$-coloring of $G$.
Then, for any set $S$ of $k$ vertices of $G$, there
exists an $S$-star in $G$.
This also implies that $\Delta(G)\geq k-1$.

{\bf Case 1:} $\Delta(G)\geq n-k+1$.

Suppose w.l.o.g that $d_G(v_1)=\Delta(G)$, and $N_G(v_1)=\{v_2,v_3,\ldots,v_{\Delta+1}\}$.
Let $S=\{v_1,v_{\Delta+2},\ldots,v_{n-1},v_{n}\}$.
Since $|S|=n-\Delta(G)\leq k-1<k$,
then there exists an $S_v$-star in $G$.
Moreover, since $v_1\nsim \{v_{\Delta+2},\ldots,v_{n-1},v_{n}\}$ in $G$, then $v\in N_G(v_1)$.
Suppose w.l.o.g that $v=v_2$.
Then, we have $d_{\overline{G}}(v_1,v_2)\geq 3$.
Since $d_{\overline{G}}(v_1,v_2)\geq 3$,
then $mvx_k(\overline{G})\leq n-diam(\overline{G})+2\leq n-1$.
Suppose $mvx_k(\overline{G})=n-1$. Then, $diam(\overline{G})=3$.
Let $g$ be an extremal $MVX_k$-coloring of $\overline{G}$.
Note that if $\overline{G}$ is $k$-monochromatically vertex-connected,
it is also monochromatically vertex-connected.
Since $mvx_k(\overline{G})=n-1$,
then there exists a vertex-monochromatic path $P=v_1xyv_2$ of length $3$ in $\overline{G}$
such that $x\in \{v_{\Delta+2},\ldots,v_{n-1},v_{n}\}$, and $y\in N_G(v_1)\setminus \{v_2\}$.
Suppose w.l.o.g that $P=v_1v_{\Delta+2}v_{\Delta+1}v_2$.
This also implies that $v_{\Delta+1}\nsim \{v_2,v_{\Delta+2}\}$ in $G$.
Let $S'=\{v_1,v_{\Delta+1},v_{\Delta+2},\ldots,v_n\}$ now.
Since $|S'|=n-\Delta(G)+1\leq k$, then there exists an $S_{v'}'$-star in $G$.
Moreover, since $v_1\nsim \{v_{\Delta+2},\ldots,v_{n-1},v_{n}\}$ and $v_{\Delta+1}\nsim \{v_2,v_{\Delta+2}\}$ in $G$, then $v'\in N_G(v_1)\setminus \{v_2,v_{\Delta+1}\}$.
Now, we have $d_{\overline{G}}(v_1,v')= 3$.
Since $mvx_k(\overline{G})=n-1$,
then $\{v_{\Delta+1},v_{\Delta+2}\}$ are the only two vertices with the same color in $\overline{G}$.
But now, since $v'\nsim\{v_{\Delta+1},v_{\Delta+2}\}$ in $\overline{G}$,
then there exists no vertex-monochromatic path connecting $\{v_1,v'\}$ in $\overline{G}$,
a contradiction.
Hence, we have that $mvx_k(\overline{G})\leq n-2$, and $mvx_k(G)+mvx_k(\overline{G})\leq 2n-2$.

{\bf Case 2:} $\Delta(G)\leq n-k$.

Since $k=\lceil\frac{n}{2}\rceil$, and $\Delta(G)\geq k-1$,
then $\lceil\frac{n}{2}\rceil-1\leq \Delta(G)\leq n-\lceil\frac{n}{2}\rceil$.

If $n$ is odd, then $\Delta(G)=\frac{n-1}{2}=k-1$.
Suppose w.l.o.g that $d_G(v_1)=\Delta(G)$, and $N_G(v_1)=\{v_2,v_3,\ldots,v_{k}\}$.
Let $S=\{v_1,v_{k+1},\ldots,v_n\}$.
Since $|S|=n-k+1=k$, then there exists an $S_v$-star in $G$.
Moreover, since $v_1\nsim \{v_{k+1},\ldots,v_{n-1},v_{n}\}$ in $G$, then $v$ is not in $S$.
But now, $d_G(v)\geq |S|=k>\Delta(G)$, a contradiction.

If $n$ is even, then $\Delta(G)=\frac{n}{2}-1$ or $\frac{n}{2}$.
Suppose w.l.o.g that $d_G(v_1)=\Delta(G)$, and $N_G(v_1)=\{v_2,v_3,\ldots,v_{\Delta+1}\}$.
If $\Delta(G)=\frac{n}{2}-1=k-1$,
then let $S=\{v_1,v_{k+1},\ldots,v_{n-1}\}$.
Since $|S|=n-k=k$, then there exists an $S_v$-star in $G$.
Moreover, since $v_1\nsim \{v_{k+1},\ldots,v_{n-1}\}$ in $G$, then $v$ is not in $S$.
But now, $d_G(v)\geq |S|=k>\Delta(G)$, a contradiction.
If $\Delta(G)=\frac{n}{2}=k$, then let $S=\{v_1,v_{k+2},\ldots,v_n\}$.
Since $|S|=n-k=k$, then there exists an $S_v$-star in $G$.
Moreover, since $v_1\nsim \{v_{k+2},\ldots,v_{n-1},v_{n}\}$ in $G$, then $v\in N_G(v_1)$.
Suppose w.l.o.g that $v=v_2$. Then, $d_G(v_2)=k=\Delta(G)$, and $N_G(v_2)=\{v_1,v_{k+2},\ldots,v_n\}$ now.
If $k\geq 4$, then let $S'=\{v_1,v_2,v_{k+1},v_{k+2}\}$.
Since $|S'|\leq k$, then there exists an $S_{v'}'$-star in $G$.
But now, since $v_1\nsim v_{k+2}$, and $v_2\nsim v_{k+1}$ in $G$,
then $v'\in N_G(v_1)\cap N_G(v_2)=\emptyset$, a contradiction.
If $k=3$, then $n=6$ now. If $\{v_2,v_3,v_4\}\sim \{v_5,v_6\}$ in $G$,
then $G$ contains a complete bipartite spanning subgraph.
But now, $\overline{G}$ is not connected, a contradiction.
So, suppose w.l.o.g that $v_4\nsim v_5$ in $G$.
Similarly consider $S'=\{v_1,v_3,v_5\}$,$\{v_1,v_4,v_5\}$, $\{v_1,v_4,v_6\}$,
and $\{v_3,v_5,v_6\}$, respectively.
Then, we will have that $v_3\sim v_5$, $v_3\sim v_4$, $v_4\sim v_6$, and $v_5\sim v_6$ in $G$, respectively.
But now, $\overline{G}$ is contained in a cycle $C_6$. Then, $mvx_3(\overline{G})\leq mvx_3(C_6)=3$.
So, for $n=6$ we have $mvx_3(G)+mvx_3(\overline{G})\leq n+3<2n-2$.

Now suppose w.l.o.g that $mvx_k(G)\leq n-1$, and $mvx_k(\overline{G})\leq n-1$, respectively.
Thus, we also have $mvx_k(G)+mvx_k(\overline{G})\leq 2n-2$.
\end{proof}

\begin{thm}
Suppose that both $G$ and $\overline{G}$ are connected graphs on $n\geq 5$ vertices.
Then, for $k$ with $\lceil\frac{n}{2}\rceil\leq k\leq n$, we have that
$mvx_k(G)+mvx_k(\overline{G})\leq 2n-2$,
and this bound is sharp.
\end{thm}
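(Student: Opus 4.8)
The plan is to reduce the whole range $\lceil\frac{n}{2}\rceil\leq k\leq n$ to the single case $k=\lceil\frac{n}{2}\rceil$ by exploiting the monotonicity of $mvx_k$ in $k$, and then to invoke the two lemmas already established. Recall from the introduction that for any connected graph $H$ on $n$ vertices we have the chain $mvx_n(H)\leq\ldots\leq mvx_3(H)\leq mvc(H)$, so $mvx_k(H)$ is non-increasing in $k$. Note also that $\lceil\frac{n}{2}\rceil\geq 3$ for $n\geq 5$, so the range in question sits inside the range where this chain and Lemma \ref{Upp} apply.

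First I would fix an arbitrary $k$ with $\lceil\frac{n}{2}\rceil\leq k\leq n$ and apply the monotonicity inequality separately to $G$ and to $\overline{G}$, obtaining $mvx_k(G)\leq mvx_{\lceil n/2\rceil}(G)$ and $mvx_k(\overline{G})\leq mvx_{\lceil n/2\rceil}(\overline{G})$. Adding these two inequalities gives
\begin{equation*}
mvx_k(G)+mvx_k(\overline{G})\leq mvx_{\lceil n/2\rceil}(G)+mvx_{\lceil n/2\rceil}(\overline{G}).
\end{equation*}
The right-hand side is exactly the quantity bounded by Lemma \ref{Upp}, which yields $mvx_{\lceil n/2\rceil}(G)+mvx_{\lceil n/2\rceil}(\overline{G})\leq 2n-2$ for $n\geq 5$. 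Combining the two displays establishes the upper bound $mvx_k(G)+mvx_k(\overline{G})\leq 2n-2$ uniformly across the entire range of $k$.

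For sharpness I would produce the extremal construction from Lemma \ref{Kmn}. Since $n\geq 5$, I can choose integers $n_1,n_2\geq 2$ with $n_1+n_2=n$ and set $G=K_{n_1,n_2}-e$ for an edge $e$ of $K_{n_1,n_2}$. By Lemma \ref{Kmn}, this graph satisfies $mvx_k(G)+mvx_k(\overline{G})=2n-2$ for every $k$ with $3\leq k\leq n$, and in particular for every $k$ in the subrange $\lceil\frac{n}{2}\rceil\leq k\leq n$. Hence the bound is attained, and the theorem follows.

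I do not expect any genuine obstacle here: all of the analytic difficulty has already been absorbed into Lemma \ref{Upp} (the delicate degree and diameter case analysis establishing the bound at $k=\lceil\frac{n}{2}\rceil$) and into Lemma \ref{Kmn} (the extremal family). The only conceptual point is recognizing that, because $mvx_k$ decreases in $k$, the sum $mvx_k(G)+mvx_k(\overline{G})$ is largest at the smallest admissible $k$, so verifying the bound at $k=\lceil\frac{n}{2}\rceil$ automatically verifies it for all larger $k$.
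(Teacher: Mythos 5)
Your proposal is correct and follows essentially the same route as the paper: monotonicity of $mvx_k$ in $k$ reduces everything to the case $k=\lceil\frac{n}{2}\rceil$ handled by Lemma \ref{Upp}, and sharpness comes from the $K_{n_1,n_2}-e$ construction of Lemma \ref{Kmn}. In fact your write-up is cleaner than the paper's, which compresses the monotonicity step into a single (slightly garbled) inequality.
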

\begin{proof}
For $k$ with $\lceil\frac{n}{2}\rceil\leq k\leq n$, we have $mvx_k(G)\leq mvx_{\lceil\frac{n}{2}\rceil}\leq 2n-2$ by Lemma \ref{Upp}.
From Lemma \ref{Kmn}, this bound is sharp for $k$ with $\lceil\frac{n}{2}\rceil\leq k\leq n$.
\end{proof}

\end{document}